\documentclass[a4paper,11pt,reqno]{amsart}
\usepackage[utf8]{inputenc}
\usepackage[english]{babel}
\usepackage{amsmath}
\usepackage{amsfonts}
\usepackage{amsthm}
\usepackage{mathtools}
\usepackage{float}
\usepackage{stmaryrd}
\usepackage{graphics}
\usepackage{graphicx}
\usepackage{subfig}
\usepackage{enumitem}
\usepackage{datetime}
\usepackage{comment}
\usepackage{todonotes}
\usepackage{bm}
\usepackage[colorlinks=false]{hyperref}
\usepackage{color}

\newcommand{\mc}{\mathcal}

\newcommand{\eps}{\varepsilon}
\renewcommand{\d}{\,\mathrm{d}}
\newcommand{\tr}{\mathrm{tr}\,}

\DeclareMathOperator*{\sym}{sym}

\newcommand\R{\mathbb R}

\renewcommand{\to}{\rightarrow}

\def\Xint#1{\mathchoice
	{\XXint\displaystyle\textstyle{#1}}%
	{\XXint\textstyle\scriptstyle{#1}}%
	{\XXint\scriptstyle\scriptscriptstyle{#1}}%
	{\XXint\scriptscriptstyle\scriptscriptstyle{#1}}%
	\!\int}
\def\XXint#1#2#3{{\setbox0=\hbox{$#1{#2#3}{\int}$}
		\vcenter{\hbox{$#2#3$}}\kern-.5\wd0}}

\def\dashint{\Xint-}

\numberwithin{equation}{section}
\newtheorem{thm}{Theorem}[section]
\newtheorem{defn}[thm]{Definition}
\newtheorem{prop}[thm]{Proposition}
\newtheorem{lemma}[thm]{Lemma}

\theoremstyle{definition}
\begingroup
\newtheorem{rmk}[thm]{Remark}

\endgroup

\theoremstyle{remark}

\title[Von K\'arm\'an shells and pressure]{Von K\'arm\'an theories for thin elastic shells subject to pressure}

\author[T. H. Giang and F. Riva]{Trung Hieu Giang and Filippo Riva}

\address[T. H. Giang]{Charles University, Faculty of Mathematics and Physics, Department of Mathematical Analysis, Sokolovsk\'a 49/83, 186 75 Prague 8, Czechia; Vietnam Academy of Science and Technology, Institute of Mathematics, 18 Hoang Quoc Viet, Hanoi, Vietnam}
\email{trung-hieu.giang@matfyz.cuni.cz}

\address[F. Riva]{Charles University, Faculty of Mathematics and Physics, Department of Mathematical Analysis, Sokolovsk\'a 49/83, 186 75 Prague 8, Czechia; Institute of Information Theory and Automation, Czech Academy of Sciences, Pod vod\'arenskou ve\v z\'i 4, CZ-182 00, Prague 8, Czechia}
\email{filippo.riva@matfyz.cuni.cz}

\begin{document}

	\begin{abstract}
		The variational derivation of two-dimensional von K\'arm\'an shell energies from three-dimensional nonlinear elasticity is investigated in the presence of constant pressure loads. On the one hand, the paper extends the existing analysis of pressure-loaded plates to the curved setting of shells. On the other hand, it complements the classical variational derivation of shell theories by incorporating a nonlinear pressure force.
		
	\end{abstract}
	
	\maketitle
	
	{\small
		\keywords{\noindent {\bf Keywords:} Gamma-convergence, nonlinear elasticity, pressure loads, shells, von K\'arm\'an theory.
		}
		\par
		\subjclass{\noindent {\bf 2020 MSC:} 
			70G75,	
			74B20,  
			74K25.	

		}
	}
	
	\pagenumbering{arabic}
	
	\medskip
	
	\tableofcontents
	
	\section*{Introduction}
	In the framework of materials science, understanding the effective behavior of low-dimensional elastic structures starting from the underlying three-dimensional theory represents a fundamental issue, also due to its numerous applications in engineering. Over the past few decades, the rigorous analysis of dimension-reduction problems has attracted considerable attention within the mathematical community, particularly in the Calculus of Variations. A variational approach to dimension-reduction, based on the tool of $\Gamma$-convergence, was pioneered in \cite{LR95}, where the authors established the derivation of nonlinear membrane theory. This line of research culminated in the seminal works \cite{FJM02,FJM06}, which provided an entire hierarchy of two-dimensional models encompassing, in particular, the so-called bending and von K\'arm\'an regimes, by developing the celebrated geometric rigidity estimate.
	
	These results were subsequently extended to shells, namely thin elastic bodies whose reference configurations need not be flat, but may instead be neighborhoods of general surfaces embedded in $\R^3$; see, e.g., \cite{ciarlet3}. We mention \cite{LR96} for the membrane theory (see also \cite{Cosserat} for the Cosserat model), \cite{FJMM03} for the bending regime, and \cite{LMP10} for the von K\'arm\'an regimes, while we refer the interested reader to the monograph \cite{Lew13} for a comprehensive treatment of the subject. Besides elasticity, dimension-reduction for shells has also been developed in the context of plasticity \cite{MaggMora}, brittle fracture \cite{AlmiPerotto}, and magnetoelasticity \cite{Tasso}.
	
	It is worth noting, however, that all the aforementioned works focus primarily on the internal energy of the body, while external forces are restricted to those described solely by linear functionals. This choice corresponds to the modeling of so-called \emph{dead loads}, namely forces that are independent of deformations. Although dead loads can be incorporated into the mathematical analysis rather straightforwardly, their mechanical relevance is limited compared with the broader and more realistic class of forces that act on the deformed configuration of the body. Such forces are necessarily represented by nonlinear functionals, and are commonly referred to as \emph{live loads} \cite{ciarlet,PodGuid}.
	
	Interest in live loads within variational approaches to elasticity has recently been renewed. For pressure forces acting on the boundary of an elastic material, a rigorous linearization procedure was developed in \cite{MR23}, while dimension-reduction problems were investigated in \cite{KR25}, encompassing all the classical energy scalings, albeit only for plates. A notable outcome of this analysis is that, in contrast to dead loads, the nonlinear nature of pressure forces leads to nontrivial interactions with the bulk elastic energy in the limit, at least in the low-energy regime associated with von K\'arm\'an theories.    
	
	Motivated by these considerations, in this paper we propose to extend the analysis of \cite{KR25} to the setting of shells. We focus on the von K\'arm\'an regimes, thereby complementing the analysis of \cite{LMP10} by replacing dead loads with pressure forces. This restriction is essentially based on two reasons. First, as shown in \cite{KR25}, nontrivial interactions between pressure forces and the bulk elastic energy arise only under low-energy scalings. Second, in the bending regimes, the dependence of the limiting energy density on the position along the midsurface gives rise to substantial technical difficulties, particularly in the construction of recovery sequences, which are beyond the scope of the present analysis.
	
	The paper is organized as follows. In Section \ref{sec:preliminaries}, we introduce the notation used throughout the paper and we recall some useful properties and definitions regarding surfaces. Section \ref{sec:elasticenergy} describes the geometry of shells, introduces the corresponding three-dimensional energy including both elastic and pressure contributions, and lists the main assumptions required for the dimension-reduction analysis. Section \ref{sec:statement} presents our main results, concerning the convergence of minimizers of the three-dimensional model to minimizers of von K\'arm\'an-type functionals. Section \ref{sec:proof} is finally devoted to the proofs of these results, which rely on $\Gamma$-convergence arguments.

	\section{Notation and geometric preliminaries}
	\label{sec:preliminaries} 
	
	We denote by $\R^{d\times d}$, $\R^{d\times d}_{\rm sym}$ and  $\R^{d\times d}_{\rm skew}$ the set of $d\times d$ real matrices and its subsets of symmetric and skew-symmetric matrices, respectively. The space of linear and continuous operators from a normed vector space $X$ to itself is indicated by $\mc L(X)$. Given a matrix $F\in \R^{d\times d}$, its symmetric part $\frac{F+F^T}{2}$is denoted by $\operatorname{sym} F$, while we write $\operatorname{skew} F$ for its skew-symmetric part $\frac{F-F^T}{2}$. The symbol $\operatorname{Id}_d$ stands for the identity matrix of dimension $d$. The set of rotations is denoted by $SO(d)$, that is,
	\begin{equation*}
		SO(d)=\{R\in\R^{d\times d}: \ R^TR= \operatorname{Id}_d, \ \det R=1\}.
	\end{equation*}
	
	Given two matrices $F,G\in\R^{d\times d}$, we write $F:G$ for the Frobenius scalar product, i.e. $F:G:=\tr F^TG$, while the euclidean scalar product of two vectors $\xi$ and $\zeta$ in $\R^d$ is indicated by $\langle\xi,\zeta\rangle$. The tensor product $\xi\otimes\zeta$ instead defines the $d\times d$ matrix with entries $(\xi\otimes\zeta)_{ij}=\xi_i\zeta_j$. Throughout the paper we adopt standard notation for Lebesgue and Sobolev spaces.	
	
	Given a two-dimensional, compact, and oriented surface $S$ in $\mathbb{R}^3$ of class $\mathcal{C}^{1,1}$, we denote by $\mathbf{n}$ the unit normal vector field on $S$, and by $T_z S$ the tangent space of $S$ at the point $z\in S$. Often, we will identify $T_z S$ either with $\R^2$, or with a subspace of $\R^3$. The surface measure on $S$ is indicated by the symbol $\d\sigma$. We also introduce the normal projection $\varrho^S$ onto $S$, namely
	\begin{equation}\label{eq:projection}
		\varrho^S(z + t\mathbf{n}(z)) = z \quad \text{for all } z \in S, \, |t| \ll 1.
	\end{equation}
	
	For any vector field $w \in W^{1,2}(S; \mathbb{R}^3)$, we write $\nabla^Sw$ for its surface gradient, while we denote by $\partial_{\tau}w(z)\in \R^3$ its tangential derivative at $z \in S$ in direction $\tau \in T_zS$, namely $\partial_\tau w(z)=\nabla^S w(z)\tau$. Then, for $z\in S$, we consider the shape operator
	$\Pi(z) \colon T_zS \to T_zS \subset \mathbb{R}^3$, which satisfies
	$$
	\Pi(z)\tau = \partial_{\tau}\mathbf{n}(z) \quad \text{for all }  \tau \in T_zS.
	$$
	With a slight abuse of notation, we identify $\Pi(z)$ with a ${3 \times 3}$ matrix by setting $\Pi(z)\mathbf{n}(z):= 0$. Then, it is well known that $\Pi : S \to \mathbb{R}_{\text{sym}}^{3 \times 3}$ is a bounded and symmetric matrix field on $S$.
	
	For the purposes of the paper, it will be convenient to recall the following notation from \cite{LMP10}, see also \cite[Chapter 5]{Lew13}. For a vector field $w \in W^{1,2}(S; \mathbb{R}^3)$, we define its symmetric surface gradient $\operatorname{sym} \nabla^Sw \in L^2(S; \mathbb{R}^{2 \times 2})$ as the field of bilinear forms on $T_zS$, defined by
	\begin{equation*}
		\langle \operatorname{sym} \nabla^Sw(z)\tau, \eta \rangle := \frac{\langle \partial_\tau w(z), \eta \rangle + \langle \partial_\eta w(z), \tau \rangle}{2} \quad \text{for all } \tau, \eta \in T_zS.
	\end{equation*}
	
	Moreover, the following spaces of functions will be crucial for the presented analysis. We refer to \cite{S79}, \cite{GSP95} and \cite{SP89} for details.
	
	\begin{defn}
		\label{def1}
		The space $\mathcal{V}(S)$ of \emph{infinitesimal isometries} on $S$ is defined as
		\begin{equation*}
			\mathcal{V}(S) = \left\{ V \in W^{2,2}(S; \mathbb{R}^3) : \ \operatorname{sym} \nabla^S V = 0 \ \text{on} \ S \right\}.
		\end{equation*}
		Equivalently, $ V \in W^{2,2}(S; \mathbb{R}^3) $ belongs to $\mathcal{V}(S) $ if and only if there exists a skew-symmetric matrix field $ A \in W^{1,2}(S; \R^{3\times 3}_{\rm skew}) $ such that 
		\begin{equation*}
			\partial_\tau V(z) = A(z) \tau\quad\text{for }z\in S \text{ and }\tau \in T_zS.
		\end{equation*}
		In this case, the matrix field $A$ is uniquely determined by $V$.
	\end{defn}
	
	\begin{defn}
		\label{def2}
		The space of \emph{finite strains} $ \mathcal{B}(S) $ consists of the symmetric matrix fields of the form
		\begin{equation*}
			\mathcal{B}(S) = \left\{ L^2 - \lim_{h \to 0} \operatorname{sym} \nabla^S w^h: \ w^h \in W^{1,2}(S; \mathbb{R}^3) \right\}.
		\end{equation*}
	\end{defn}
	
	\section{Elastic shells and pressure loads}\label{sec:elasticenergy}
	
	Let $S$ be a two-dimensional surface in $\mathbb{R}^3$ of class $\mathcal{C}^{1,1}$. We assume that $S$ is compact, connected, oriented, and that its boundary $\partial S$ is the union of finitely many (possibly none) Lipschitz continuous curves. Consider a family $\{S^h\}_{h > 0}$ of shells with midsurface $S$, representing the reference configuration of a thin hyperelastic three-dimensional body, namely
	$$
	S^h = \{x = z + t\mathbf{n}(z)\in \R^3: \, z \in S, \, -h/2 < t < h/2\}, \quad h \le h_0.
	$$
	Above, $h_0$ is chosen small enough such that the projection $\varrho^S$ in \eqref{eq:projection} is well defined on $S^h$ for all $h\le h_0$. In particular, observe that one could choose $h_0$ so that
	\begin{equation}\label{eq:bdh0}
		\frac 12 \le \det \big( \operatorname{Id}_3 + t \Pi(z) \big) \le \frac 32 \quad \text{for }z\in S\text{ and }t\in\left(-\frac{h_0}{2},\frac{h_0}{2}\right).
	\end{equation}
	
	We are interested in the situation of a uniform constant pressure applied to the shell. Following \cite{KR25} (see also \cite{MR23}), such model can be variationally understood by means of the total energy functional per unit thickness $\mathcal E_h^{\alpha, \pi}: W^{1,2}(S^h; \mathbb{R}^3) \to (-\infty, +\infty]$ given by
	\begin{equation}\label{eq:3denergy}
		\mc E_h^{\alpha, \pi}(u) = 
		\begin{cases} \displaystyle
			\frac 1h\int_{S^h} W(\nabla u(x)) \d x + h^{\alpha-1} \pi \int_{S^h} \det \nabla u(x) \d x, & \text{if } u \in \text{dom } \mc E_h^{\alpha, \pi}, \\
			+\infty, & \text{otherwise},
		\end{cases}
	\end{equation}
	where the domain $\text{dom } \mc E_h^{\alpha, \pi}$ consists of all deformations $u \in W^{1,2}(S^h; \mathbb{R}^3)$ such that both $W(\nabla u)$ and the Jacobian $\det \nabla u$ are integrable over $S^h$. The first integral above represents the elastic bulk energy stored in the material, where $W: \mathbb{R}^{3 \times 3} \to [0, +\infty]$ is the elastic energy density. Instead, the second integral is the potential associated with a constant pressure load of intensity $h^\alpha \pi \in \mathbb{R}$ acting on the boundary of the deformed configuration, see for instance \cite{ciarlet} or \cite{PodGuid}. As mentioned in the Introduction, the present paper covers low-energy regimes, corresponding to $\alpha\ge 2$, thus modeling weak pressure loads.
	
	We assume standard properties on the density $W$:
	\begin{subequations}
		\begin{align}
			\label{frameindif}
			&\text{\textbullet} \, W(RF) = W(F) \textrm{ for all } R \in SO(3) \text{ and } F \in \mathbb{R}^{3 \times 3};  \\
			&\text{\textbullet} \, W(F) = 0 \iff F \in SO(3);  \\
			\label{coercivity}
			&\text{\textbullet} \, W(F) \geq c \operatorname{dist}(F; SO(3))^2 \text{ for all } F \in \mathbb{R}^{3 \times 3};  \\
			\label{continuity}
			&\text{\textbullet} \, W \textrm{ is of class } C^2 \textrm{ in a neighborhood of } SO(3).
		\end{align}        
	\end{subequations}
	As pointed out in \cite{MR23,KR25}, the coercivity condition \eqref{coercivity} yields
	\begin{equation}
		\label{compare}
		W(F) \geq c |\det F - 1|^2 \quad \text{whenever } |\det F - 1| \leq 1. 
	\end{equation}
	
	Similarly to \cite{KR25}, we also need the following additional assumptions in order to deal with the pressure term in the asymptotic analysis $h\to 0$. We require that either
	\begin{equation}
		\label{positivity}
		\pi > 0 \quad \text{and} \quad W(F) = +\infty \text{ if } \det F \leq 0, 
	\end{equation}
	or 
	\begin{equation}
		\label{linearcontrol}
		W(F) \geq c |\det F - 1| \quad \text{whenever } |\det F - 1| > 1.
	\end{equation}
	
	In dimension-reduction procedures, it is useful to properly reformulate the problem under study on a fixed domain, in this context given by the shell $S^{h_0}$. To this aim, given a deformation $u\in  W^{1,2}(S^h; \mathbb{R}^3)$, we consider the rescaled deformation $y\in  W^{1,2}(S^{h_0}; \mathbb{R}^3)$ defined as
	\begin{equation*}
		y(z + t\mathbf{n}(z)) := u\left(z + t \frac{h}{h_0} \mathbf{n}(z)\right)\quad\text{for }z\in S,\,|t|<\frac{h_0}{2}.
	\end{equation*}
	We also introduce the rescaled gradient $\nabla_h y$ as
	\begin{equation*}
		\nabla_h y(z + t\mathbf{n}(z)):= \nabla u\left(z + t \frac{h}{h_0} \mathbf{n}(z)\right) \quad \text{for } z \in S, \, |t| < \frac{h_0}{2}.
	\end{equation*}
	
	The following lemma states the relevant properties of such scaling and provides a useful change of variables formula for the energy \eqref{eq:3denergy}.  For more details, we refer to \cite[Lemma 5.5]{Lew13}. We stress that, with a slight abuse of notation, we still write $\mc E_h^{\alpha,\pi}$ for the rescaled energy with argument $y\in  W^{1,2}(S^{h_0}; \mathbb{R}^3)$.
	
	\begin{lemma}
		\label{lem:changeofvarables}
		For $z\in S$, $t\in \left(-\frac{h_0}{2},\frac{h_0}{2}\right)$, and $\tau \in T_zS$ there holds
		\begin{align*}
			\partial_{\tau} y(z + t \mathbf{n}(z)) &= \nabla_h y(z + t \mathbf{n}(z)) (\mathrm{Id}_3 + t \tfrac{h}{h_0} \Pi(z)) (\mathrm{Id}_3 + t \Pi(z))^{-1} \tau,\\
			\partial_{\mathbf{n}} y(z + t \mathbf{n}(z)) &= \tfrac{h}{h_0} \nabla_h y(z + t \mathbf{n}(z)) \mathbf{n}(z).
		\end{align*}    
		Moreover, for $y$ induced by $u \in \operatorname{dom} \mathcal E_h^{\alpha, \pi}$, we can write:
		\begin{align*}
			\mathcal{E}^{\alpha,\pi}_h(y) &= \frac{1}{h_0} \int_{S^{h_0}} W(\nabla_h y(z + t \mathbf{n}(z))) \det\big((\mathrm{Id}_3 + t \tfrac{h}{h_0} \Pi(z)) (\mathrm{Id}_3 + t \Pi(z))^{-1}\big) \d x \\
			& \quad + \frac{h^\alpha \pi}{h_0} \int_{S^{h_0}} \det \nabla_h y(z + t \mathbf{n}(z)) \det\big((\mathrm{Id}_3 + t \tfrac{h}{h_0} \Pi(z)) (\mathrm{Id}_3 + t \Pi(z))^{-1}\big) \d x\\
			&= \int_S \dashint_{-h_0/2}^{h_0/2} W(\nabla_h y(z + t \mathbf{n}(z))) \mathcal{D}_h(z,t) \d t  \d \sigma(z)\\
			& \quad + h^\alpha \pi\int_S \dashint_{-h_0/2}^{h_0/2} \det \nabla_h y(z + t \mathbf{n}(z)) {\mc D}_h(z,t) \d t \d\sigma(z),
		\end{align*}
		where
		\begin{equation}\label{eq:Dh}
			{\mc D}_h(z,t) := \det\bigg(\operatorname{Id}_3 + t\frac{h}{h_0} \Pi(z)\bigg) \quad\textrm{ for } z\in S\text{ and }|t|<\frac{h_0}{2}.
		\end{equation}
	\end{lemma}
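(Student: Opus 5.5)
The plan is to work directly with the two parametrizations of $S^h$ and $S^{h_0}$ and then change variables in the integrals. Introduce the map
\[
\Phi_h\colon S^{h_0}\to S^h,\qquad \Phi_h(z+t\mathbf n(z)) := z+t\tfrac{h}{h_0}\mathbf n(z),
\]
so that $y=u\circ\Phi_h$ and $\nabla_h y=(\nabla u)\circ\Phi_h$. By the chain rule, $\nabla y = (\nabla u\circ\Phi_h)\,\nabla\Phi_h = \nabla_h y\,\nabla\Phi_h$. The derivative formulas therefore reduce to computing $\nabla\Phi_h$ at $x=z+t\mathbf n(z)$ on the tangent directions and on the normal direction.

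\emph{Normal direction.} The curve $s\mapsto z+(t+s)\mathbf n(z)$ stays on the normal fiber, and $\Phi_h$ maps it to $z+(t+s)\frac{h}{h_0}\mathbf n(z)$. Differentiating in $s$ gives $\partial_{\mathbf n}\Phi_h=\frac{h}{h_0}\mathbf n(z)$, and hence the second identity.

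\emph{Tangent directions.} Take $\tau\in T_zS$ and a curve $\gamma$ in $S$ with $\gamma(0)=z$ and $\gamma'(0)=\eta\in T_zS$. The curve $\gamma(s)+t\mathbf n(\gamma(s))$ in $S^{h_0}$ has velocity $(\mathrm{Id}_3+t\Pi(z))\eta$ at $s=0$, while its image under $\Phi_h$ has velocity $(\mathrm{Id}_3+t\frac{h}{h_0}\Pi(z))\eta$. Both matrices map $T_zS$ into itself. By \eqref{eq:bdh0}, $\mathrm{Id}_3+t\Pi(z)$ is invertible, so we may choose $\eta=(\mathrm{Id}_3+t\Pi(z))^{-1}\tau$. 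This gives $\partial_\tau\Phi_h=(\mathrm{Id}_3+t\frac h{h_0}\Pi)(\mathrm{Id}_3+t\Pi)^{-1}\tau$, which is the first identity. Here we use that $\Pi$ is defined a.e.\ and is bounded, since $S\in\mathcal C^{1,1}$. Both factors act as the identity on $\mathbf n$, because $\Pi\mathbf n=0$. Consequently $\det\nabla\Phi_h=\frac{h}{h_0}\det\big((\mathrm{Id}_3+t\tfrac h{h_0}\Pi)(\mathrm{Id}_3+t\Pi)^{-1}\big)$, and this determinant is positive for $h\le h_0$.

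\emph{Energy.} Apply the change of variables $x'=\Phi_h(x)$ in both integrals of \eqref{eq:3denergy}; $\Phi_h$ is a bi-Lipschitz bijection. This gives
\[
\int_{S^h}f(\nabla u)\,\mathrm dx'=\frac{h}{h_0}\int_{S^{h_0}}f(\nabla_h y)\det\big((\mathrm{Id}_3+t\tfrac h{h_0}\Pi)(\mathrm{Id}_3+t\Pi)^{-1}\big)\,\mathrm dx
\]
for $f=W$ and $f=\det$. Multiplying by $\frac1h$ and by $h^{\alpha-1}\pi$, respectively, yields the first expression. For the second expression, use the coarea-type formula for tubular coordinates, $\mathrm dx=\det(\mathrm{Id}_3+t\Pi(z))\,\mathrm dt\,\mathrm d\sigma(z)$, obtained by applying the tangent computation above with $h$ replaced by $h_0$. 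The factor $\det(\mathrm{Id}_3+t\Pi)$ cancels against the inverse, leaving $\mc D_h(z,t)$. Finally, $\frac1{h_0}\int_{-h_0/2}^{h_0/2}$ is exactly $\dashint_{-h_0/2}^{h_0/2}$.

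The main obstacle is the justification with minimal regularity. $\mathbf n$ is only Lipschitz, so $\Phi_h$ is only Lipschitz and the tubular parametrization is bi-Lipschitz. For the chain rule in $W^{1,2}$ and for the area formula, I would first argue with the Lipschitz maps. I would then use Rademacher's theorem and the a.e.\ differentiability of $\mathbf n$, together with the invertibility bound \eqref{eq:bdh0}, which makes all Jacobians bounded away from zero. Integrability is preserved because the Jacobians are bounded above and below, so $u\in\operatorname{dom}\mc E_h^{\alpha,\pi}$ makes both integrals finite in the rescaled form.
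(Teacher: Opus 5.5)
Your proof is correct and is essentially the standard computation behind \cite[Lemma 5.5]{Lew13}, which the paper cites instead of giving its own proof: the chain rule for $y=u\circ\Phi_h$, with $\nabla\Phi_h$ acting as $(\mathrm{Id}_3+t\tfrac{h}{h_0}\Pi)(\mathrm{Id}_3+t\Pi)^{-1}$ on $T_zS$ and sending $\mathbf n$ to $\tfrac{h}{h_0}\mathbf n$, then the bi-Lipschitz change of variables and the tubular Jacobian $\det(\mathrm{Id}_3+t\Pi)$, which cancels to leave $\mc D_h$. Your regularity remarks (Rademacher, the Sobolev chain rule under bi-Lipschitz maps, Jacobians bounded away from zero by \eqref{eq:bdh0}) adequately cover the $\mathcal C^{1,1}$ setting, with the derivative identities understood almost everywhere.
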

	
	\begin{rmk}
		Observe that, due to the choice of $h_0$, the bound \eqref{eq:bdh0}  implies the control
		\begin{equation}\label{fixed-h0}
			\frac 12 \le {\mc D}_h(z,t) \le \frac 32 \quad \text{for }z\in S\text{ and }|t|<\frac{h_0}{2}.
		\end{equation}
	\end{rmk}
	
	\section{Statement of the main results}
	\label{sec:statement}
	
	In this section we state our main results, concerning the convergence of minimizers of the three-dimensional energy \eqref{eq:3denergy} to minimizers of effective two-dimensional energies, in the form of von K\'arm\'an-type functionals.     
	
	To describe the limit models arising through the dimension-reduction procedure, it is useful to introduce the following notation. We first consider the quadratic form $Q_3$ on $\mathbb{R}^{3\times 3}$ defined by
	\begin{equation*}
		Q_3(F) := \mathbb CF:F\quad \textrm{ for all } F \in \mathbb{R}^{3\times 3},
	\end{equation*}
	where $\mathbb C:=D^2 W(\mathrm{Id}_3)$ is the elasticity tensor, which we recall is symmetric. For $z\in S$, we then define the contraction of $Q_3$ on $T_zS$ as
	\begin{equation}\label{eq:quad2}
		Q_2(z, G):= \min \{{Q}_3(F ): F_{\rm tan}(z)=G \} \quad \text{for } G \in \mathcal L(T_z S),
	\end{equation}
	where for any $F\in \R^{3\times 3}$ we denote by $F_{\rm tan}(z)\in \mathcal L(T_z S)$ the unique linear operator from $T_z S$ to itself satisfying the identity
	\begin{equation}\label{eq:tan}
		\langle F_{\rm tan}(z)\tau, \eta \rangle = \langle F\tau, \eta \rangle \quad \text{for all } \tau, \eta \in T_zS.
	\end{equation}
	Conversely, given $G\in \mathcal L(T_z S)$, we extend it to a $3\times 3$ matrix $\overline G(z)$ by setting $\overline G(z)\mathbf{n}(z):=0$. In coordinates, if $(\tau_1(z),\tau_2(z),\mathbf{n}(z))$ denotes an orthonormal bases of $\R^3$, observe that one can write
	\begin{equation}\label{eq:bar}
		\overline G(z)=\sum_{i,j=1}^2\langle G\tau_j(z),\tau_i(z)\rangle\tau_i(z)\otimes\tau_j(z).
	\end{equation}    
	It is well known that both $Q_3$ and $Q_2(z,\cdot)$ depend only on the symmetric part of their argument, due to \eqref{frameindif}, and that they are positive-definite when restricted to the subspace of symmetric operators, due to \eqref{coercivity}. We refer for instance to \cite[Lemmas 5.2 and 5.7]{Lew13}.
	
	Notice that the request $F_{\rm tan}(z)=G$ in \eqref{eq:quad2} is equivalent to $(F-\overline G(z))_{\rm tan}(z)=0$. Thus, since the only $3\times 3$ matrices inducing the null form on $T_z S$ are of the form
	\begin{equation*}
		a_1\otimes\mathbf{n}(z)+\mathbf{n}(z)\otimes a_2,\quad\text{for }a_1,a_2\in \R^3,
	\end{equation*}
	recalling that $Q_3$ and $Q_2$ depend only on the symmetric part of their argument, we can write
	\begin{equation}\label{eq:quad2'}
		Q_2(z, G)= \min_{a\in\mathbb{R}^3} \{{Q}_3(\overline G(z)+a\otimes \mathbf{n}(z)) \} \quad \text{for } G \in \mathcal L(T_z S).
	\end{equation}
	Following this approach, for $z\in S$ we finally introduce the function
	\begin{equation}
		\label{Q2pi}
		Q_2^\pi (z, G) = \min_{a\in\mathbb{R}^3} \{Q_3 (\overline G(z) + a \otimes \mathbf{n}(z) ) + 2\pi \langle a, \mathbf{n}(z)\rangle\} \quad \text{for } G \in \mathcal L(T_z S),
	\end{equation}
	which encodes the interaction effects between the bulk elastic energy and the pressure load. The next lemma clarifies the dependence of $Q_2^\pi$ on the parameter $\pi$.
	\begin{lemma}
		For any $z\in S$, there exists a linear operator $\mathfrak L(z, \cdot) : T_z S \to \mathbb{R}$ and a real number $\kappa(z)$ such that 
		\begin{equation}
			\label{Q2pivalue}
			Q_2^\pi (z, G) = Q_2 (z, G) + \pi \mathfrak L(z, G) + \pi^2 \kappa(z)\quad\text{for all }G \in \mathcal L(T_z S).
		\end{equation}
		More precisely, we have
		\begin{subequations}
			\begin{equation}\label{eq:Q2}
				Q_2(z,G)={Q}_3(\overline G(z))-\langle \mathbb M^{-1}_{\mathbf{n}(z)}\mathbb C\overline G(z)\mathbf{n}(z),\mathbb C\overline G(z)\mathbf{n}(z)\rangle,
			\end{equation}
			\begin{equation}\label{eq:L}
				\mathfrak L(z,G)=-2\langle\mathbb M^{-1}_{\mathbf{n}(z)}\mathbf{n}(z),\mathbb C\overline G(z)\mathbf{n}(z)\rangle,
			\end{equation}
			and 
			\begin{equation}\label{eq:k}
				\mathfrak \kappa(z)=-\langle\mathbb M^{-1}_{\mathbf{n}(z)}\mathbf{n}(z),\mathbf{n}(z)\rangle,
			\end{equation}            
		\end{subequations}
		where $\mathbb M_{\mathbf{n}(z)}\in\R^{3\times 3}_{\rm sym}$ is the symmetric matrix characterized by
		\begin{equation}\label{eq:Mndef}
			\mathbb M_{\mathbf{n}(z)}a=\mathbb C(a\otimes \mathbf{n}(z))\mathbf{n}(z)\quad\text{ for all }a\in\R^3.
		\end{equation}
		
		In particular, since $S$ is of class $C^{1,1}$, the maps $z\mapsto \|\mathfrak L(z,\cdot)\|_*$ and $z\mapsto \kappa(z)$ are Lipschitz continuous on $S$.
	\end{lemma}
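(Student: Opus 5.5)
The minimization in \eqref{Q2pi} is an unconstrained quadratic problem in $a\in\R^3$, so the plan is to solve it explicitly via its Euler--Lagrange equation and substitute back. Fix $z\in S$, write $\mathbf{n}=\mathbf{n}(z)$ and $\overline G=\overline G(z)$, and expand using the symmetry of $\mathbb C$:
\begin{equation*}
Q_3(\overline G+a\otimes\mathbf{n})+2\pi\langle a,\mathbf{n}\rangle=Q_3(\overline G)+2\langle a,\mathbb C\overline G\mathbf{n}\rangle+\langle \mathbb M_{\mathbf{n}}a,a\rangle+2\pi\langle a,\mathbf{n}\rangle .
\end{equation*}
Here we used $\mathbb C\overline G:(a\otimes\mathbf{n})=\langle \mathbb C\overline G\mathbf{n},a\rangle$ and $Q_3(a\otimes \mathbf{n})=\langle \mathbb C(a\otimes\mathbf{n})\mathbf{n},a\rangle=\langle\mathbb M_{\mathbf{n}}a,a\rangle$. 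From the symmetry of $\mathbb C$ one also gets that $\mathbb M_{\mathbf{n}}$ defined in \eqref{eq:Mndef} is symmetric, since $\langle\mathbb M_{\mathbf{n}}a,b\rangle=\mathbb C(a\otimes\mathbf{n}):(b\otimes\mathbf{n})$.

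The key step is to show that $\mathbb M_{\mathbf{n}}$ is positive definite, so that the minimum exists, is unique, and the matrix can be inverted. Positive semidefiniteness is clear, because $Q_3\ge0$: this follows from $W\ge0$, $W(\mathrm{Id}_3)=0$ and \eqref{continuity}. For definiteness, suppose $\langle\mathbb M_{\mathbf{n}}a,a\rangle=Q_3(a\otimes\mathbf{n})=0$. Since $Q_3$ is positive definite on symmetric matrices and depends only on the symmetric part, this forces $\operatorname{sym}(a\otimes\mathbf{n})=0$. Testing with $\mathbf{n}$ gives $a+\langle a,\mathbf{n}\rangle\mathbf{n}=0$, hence $a=0$. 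This is the only point needing care; everything else is algebra.

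With $\mathbb M_{\mathbf{n}}$ invertible, the minimizer is $a^*=-\mathbb M_{\mathbf{n}}^{-1}(\mathbb C\overline G\mathbf{n}+\pi\mathbf{n})$. The minimum value is $Q_3(\overline G)-\langle\mathbb M^{-1}_{\mathbf{n}}b,b\rangle$ with $b=\mathbb C\overline G\mathbf{n}+\pi\mathbf{n}$. Expanding this bilinearly and using the symmetry of $\mathbb M_{\mathbf{n}}^{-1}$ gives exactly \eqref{eq:Q2}, \eqref{eq:L} and \eqref{eq:k}. The $\pi=0$ case recovers $Q_2$ through \eqref{eq:quad2'}, and this yields \eqref{Q2pivalue}. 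Linearity of $\mathfrak L(z,\cdot)$ is evident, since $G\mapsto\overline G(z)$ is linear by \eqref{eq:bar}.

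For the regularity claim, note that $\mathbf{n}$ is Lipschitz because $S\in\mathcal C^{1,1}$. Then $z\mapsto\mathbb M_{\mathbf{n}(z)}$ is Lipschitz, being quadratic in $\mathbf{n}$. Its smallest eigenvalue is bounded below uniformly, since it is a continuous positive function of $\mathbf{n}$ on the compact unit sphere. Because inversion is smooth on matrices with eigenvalues bounded away from zero, $z\mapsto\mathbb M^{-1}_{\mathbf{n}(z)}$ is Lipschitz, and so is $\kappa$ by \eqref{eq:k}. For $\mathfrak L$, we have $\|\mathfrak L(z,\cdot)\|_*=\sup_{|G|\le1}2|\langle \overline G(z)\mathbf{n},\mathbb C(\mathbb M^{-1}_{\mathbf{n}}\mathbf{n}\otimes \mathbf{n})\rangle|$. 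Since $\overline G$ ranges over symmetric-like tangential matrices, this equals $2|P(z)\,\mathbb C(\mathbb M^{-1}_{\mathbf{n}}\mathbf{n}\otimes\mathbf{n})P(z)|$ with $P=\mathrm{Id}_3-\mathbf{n}\otimes\mathbf{n}$, which is a composition of Lipschitz maps and hence Lipschitz.
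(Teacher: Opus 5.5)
Your proposal is correct and follows the paper's proof. Like the paper, you expand $Q_3(\overline G+a\otimes\mathbf n)$, use positive definiteness of $\mathbb M_{\mathbf n}$ to get $a^*=-\mathbb M^{-1}_{\mathbf n}(\mathbb C\overline G\mathbf n+\pi\mathbf n)$, and expand $-\langle\mathbb M^{-1}_{\mathbf n}b,b\rangle$; you additionally justify the definiteness of $\mathbb M_{\mathbf n}$ and the Lipschitz claim, which the paper only asserts.

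One small slip in your dual-norm formula: $\langle\overline G(z)\mathbf n,\mathbb C(\cdots)\rangle$ should be the Frobenius pairing $\overline G(z):\mathbb C(\mathbb M^{-1}_{\mathbf n}\mathbf n\otimes\mathbf n)$, since as written it vanishes because $\overline G(z)\mathbf n(z)=0$. With that correction, your identification of the norm as $2|P\,\mathbb C(\mathbb M^{-1}_{\mathbf n}\mathbf n\otimes\mathbf n)P|$ is right.
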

	\begin{proof}
		For the sake of brevity, throughot the proof we do not stress the dependence on $z$. Expanding the term $Q_3(\overline G+a\otimes \mathbf{n})$ in the definition of $Q_2^\pi$, with simple manipulations we deduce
		\begin{align*}
			Q_2^\pi(z,G)&=  \min_{a\in\mathbb{R}^3} \{{Q}_3(\overline G)+2\langle\mathbb C\overline G\mathbf{n},a\rangle+\langle\mathbb M_{\mathbf{n}}a,a\rangle+2\pi\langle\mathbf{n},a\rangle \}\\
			&={Q}_3(\overline G)+\min_{a\in\mathbb{R}^3} \{\langle\mathbb M_{\mathbf{n}}a,a\rangle+2\langle\mathbb C\overline G\mathbf{n}+\pi \mathbf n,a\rangle \}.
		\end{align*}
		Recalling that $\mathbb M_{\mathbf{n}}$ is positive-definite---hence invertible---since $\mathbb C$ is positive-definite on symmetric matrices, the minimum point $a^\pi$ can be computed as
		\begin{equation*}
			a^\pi=-\mathbb M^{-1}_{\mathbf{n}}( \mathbb C\overline G\mathbf{n}+\pi \mathbf n).
		\end{equation*}
		We thus infer
		\begin{align*}
			Q_2^\pi(z,G)&= {Q}_3(\overline G)-\langle  \mathbb C\overline G\mathbf{n}+\pi \mathbf n,\mathbb M^{-1}_{\mathbf{n}}( \mathbb C\overline G\mathbf{n}+\pi \mathbf n)\rangle\\
			&={Q}_3(\overline G)-\langle\mathbb C\overline G\mathbf{n},\mathbb M^{-1}_{\mathbf{n}}\mathbb C\overline G\mathbf{n}\rangle-2\pi\langle\mathbf n,\mathbb M^{-1}_{\mathbf{n}}\mathbb C\overline G\mathbf{n}\rangle-\pi^2\langle\mathbf n,\mathbb M^{-1}_{\mathbf{n}}\mathbf n\rangle,
		\end{align*}
		and we conclude since $\mathbb M_{\mathbf{n}}$ is symmetric.
	\end{proof}
	
	The effective two-dimensional theories will be described in terms of the following von K\'arm\'an-type energies, depending on infinitesimal isometries $V\in \mc V(S)$ and finite strains $B\in \mc B(S)$ of the surface $S$. If $\alpha=2$, the obtained limit functional $\mc E^\pi_{\rm vK}\colon \mathcal{V}(S)\times \mathcal{B}(S)\to \R$ takes the form
	\begin{subequations}\label{eq:envK}
		\begin{equation}\label{eq:envKpi}
			\mathcal{E}^\pi_{\rm vK}(V, B):=  \mathcal{E}_{\rm vK}(V, B)  +\pi\int_S\Bigg(\frac 12 \mathfrak L\left(z, B- \frac{1}{2}(A^2)_{\rm tan}\right)+ \operatorname{tr}_z \left( B - \frac{1}{2}(A^2)_{\rm tan}\right) \Bigg) \d \sigma(z),
		\end{equation}
		where the von K\'arm\'an energy
		$\mc E_{\rm vK}\colon \mathcal{V}(S) \times \mathcal{B}(S)\to [0,+\infty)$, introduced in \cite{LMP10},
		is given by
		\begin{equation}
			\mc E_{\rm vK}(V, B):= \frac 12 \int_S Q_2\left(z, B - \frac{1}{2}(A^2)_{\rm tan}\right) \d \sigma(z)+\frac{1}{24}\int_SQ_2(z, (\nabla^S(A \mathbf{n}) - A\Pi)_{\rm tan}) \d \sigma(z).
		\end{equation}              
	\end{subequations}
	In the two expressions above, $A$ is the matrix field associated with $V$ as in Definition \ref{def1}, $\mathfrak L$ is the linear operator introduced in \eqref{Q2pivalue}, and $\operatorname{tr}_z$ denotes the tangential trace on $T_z S$.
	
	If $\alpha>2$ instead, in the limit $h\to 0$ we derive the functional $\mc E^\pi_{\rm vK, lin}\colon \mathcal{V}(S)\times \mathcal{B}(S)\to \R$ defined by
	\begin{subequations}\label{eq:envKlin}
		\begin{equation}\label{eq:envKlinpi}
			\mathcal{E}^\pi_{\rm vK,lin}(V, B):= \mathcal{E}_{\rm vK,lin}(V, B)  +\pi\int_S\Bigg(\frac 12 \mathfrak L\left(z, B\right)  + \operatorname{tr}_z \left( B \right) \Bigg) \d \sigma(z),
		\end{equation}
		where the linearized von K\'arm\'an energy
		$\mc E_{\rm vK,lin}\colon \mathcal{V}(S) \times \mathcal{B}(S)\to [0,+\infty)$, again found in \cite{LMP10}, is given by
		\begin{equation}
			\mc E_{\rm vK,lin}(V, B):= \frac 12 \int_S Q_2\left(z, B \right) \d \sigma(z) +\frac{1}{24}\int_SQ_2(z, (\nabla^S(A \mathbf{n}) - A\Pi)_{\rm tan}) \d \sigma(z).
		\end{equation}           
	\end{subequations}
	Clearly, in the pressureless case $\pi=0$ we recover the functionals obtained in \cite{LMP10}.
	
	\begin{rmk}[\textbf{The isotropic case}]
		When the elastic body is isotropic, we can provide a more explicit expression for the limiting von K\'arm\'an energies, which better shows the interaction between elastic energy and pressure. Indeed, in this situation it is well-known (see \cite{ciarlet} or \cite{FJM02}) that
		\begin{equation*}
			Q_3(F)=2\mu|\sym F|^2+\lambda(\tr F)^2 \quad\text{for }F\in\R^{3\times 3},
		\end{equation*}
		where $\mu>0$ and $\lambda>-\frac 23\mu$ are the Lam\'e coefficients of the material. The two-dimensional functionals take then the form
		\begin{equation}\label{eq:finalformula}
			\mathcal{E}^\pi_{\rm vK}(V, B)=  \mathcal{E}_{\rm vK}(V, B)  +\frac{2\mu\pi}{2\mu+\lambda}\int_S \left(\operatorname{tr}_z  B + \frac{|A|^2-|A\mathbf{n}|^2}{2}\right) \d \sigma(z),
		\end{equation}
		and
		\begin{equation*}
			\mathcal{E}^\pi_{\rm vK,lin}(V, B)= \mathcal{E}_{\rm vK,lin}(V, B)  +\frac{2\mu\pi}{2\mu+\lambda}\int_S \operatorname{tr}_z \left( B \right)  \d \sigma(z).
		\end{equation*}
		
		To clarify the above expressions, it is enough to find an explicit formula for the linear operator $\mathfrak L$ in \eqref{eq:L}. To this aim, recalling that in the isotropic framework one has
		\begin{equation*}
			\mathbb C F=2\mu\sym F+\lambda(\tr F)\operatorname{Id}_3\quad\text{for }F\in\R^{3\times 3},
		\end{equation*}
		simple computations first show that
		\begin{equation}\label{eq:CGn}
			\mathbb C\overline G(z)\mathbf{n}(z)=\lambda (\tr \overline G(z))\mathbf{n}(z).
		\end{equation}
		Analogously, by means of \eqref{eq:Mndef}, for $a\in\R^3$ we can compute
		\begin{equation*}
			\mathbb M_{\mathbf{n}(z)}a=\mu a+(\mu+\lambda)\langle a,\mathbf{n}(z)\rangle \mathbf{n}(z) =   \mu a_{\rm tan}+(2\mu+\lambda)\langle a,\mathbf{n}(z)\rangle \mathbf{n}(z),  
		\end{equation*}
		where $a_{\rm tan}=a-\langle a,\mathbf{n}(z)\rangle \mathbf{n}(z)$, namely
		\begin{equation*}
			\mathbb M_{\mathbf{n}(z)}=\mu(\operatorname{Id}_3-\mathbf{n}(z)\otimes\mathbf{n}(z))+(2\mu+\lambda)\mathbf{n}(z)\otimes \mathbf{n}(z).
		\end{equation*}
		The inverse matrix takes then the form
		\begin{equation*}
			\mathbb M^{-1}_{\mathbf{n}(z)}=\frac{1}{\mu}(\operatorname{Id}_3-\mathbf{n}(z)\otimes\mathbf{n}(z))+\frac{1}{2\mu+\lambda}\mathbf{n}(z)\otimes \mathbf{n}(z),
		\end{equation*}
		whence
		\begin{equation}\label{eq:Mn}
			\mathbb M^{-1}_{\mathbf{n}(z)}\mathbf{n}(z)=\frac{1}{2\mu+\lambda}\mathbf{n}(z).
		\end{equation}
		Finally, combining \eqref{eq:CGn} and \eqref{eq:Mn} we directly obtain
		\begin{align*}
			\mathfrak L(z,G)&=-2\langle\mathbb M^{-1}_{\mathbf{n}(z)}\mathbf{n}(z),\mathbb C\overline G(z)\mathbf{n}(z)\rangle=-\frac{2\lambda}{2\mu+\lambda}\tr \overline G(z)\\
			&=-\frac{2\lambda}{2\mu+\lambda}\tr_z G,
		\end{align*}
		whence
		\begin{equation*}
			\pi\left(\frac{1}{2}\mathfrak L(z,G)+\tr_zG\right)=\frac{2\mu\pi}{2\mu+\lambda}\tr_z G.
		\end{equation*}
		Formula \eqref{eq:finalformula} now follows since, recalling that $A$ is skew-symmetric, there holds
		\begin{equation*}
			\tr_z(A^2)_{\rm tan}=\tr A^2-\langle A^2\mathbf n(z),\mathbf n(z)\rangle=-\tr A^TA-\langle A\mathbf n(z),A^T\mathbf n(z)\rangle=-|A|^2+|A\mathbf n(z)|^2.
		\end{equation*}       
		
		For the sake of completeness, using \eqref{eq:Q2} we also compute
		\begin{align*}
			Q_2(z,G)=&{Q}_3(\overline G(z))-\langle \mathbb M^{-1}_{\mathbf{n}(z)}\mathbb C\overline G(z)\mathbf{n}(z),\mathbb C\overline G(z)\mathbf{n}(z)\rangle\\
			=&2\mu|\sym \overline G(z)|^2+\frac{2\mu\lambda}{2\mu+\lambda}(\tr\overline G(z))^2\\
			=&2\mu|\sym G|_{T_zS}^2+\frac{2\mu\lambda}{2\mu+\lambda}(\tr_z G)^2,
		\end{align*}
		while by \eqref{eq:k} we infer
		\begin{equation*}
			\kappa(z)=-\frac{1}{2\mu+\lambda}.
		\end{equation*}            
	\end{rmk}
	
	Since the energy ${\mc E}^{\alpha,\pi}_h$ may fail to admit minimizers under solely the previous assumptions, it is costumary to employ the following notion of almost minimizers. We say that $y^h$ is an $\alpha$-minimizer of ${\mc E}^{\alpha,\pi}_h$ if
	\begin{equation}\label{eq:almostmin}
		\lim\limits_{h\to 0}\frac{1}{h^{2\alpha}}\Big({\mc E}^{\alpha,\pi}_h(y^h)-\inf {\mc E}^{\alpha,\pi}_h\Big)=0.
	\end{equation}
	We are now in a position to state our main results.
	
	\begin{thm}[\textbf{Von K\'arm\'an theory ($\boldsymbol{\alpha=2}$)}]
		\label{thm:vonkarman}
		Assume that $W$ satisfies \eqref{frameindif}-\eqref{continuity}, together with either \eqref{positivity} or \eqref{linearcontrol}, and let $y^h$ be a $2$-minimizer of ${\mc E}^{2,\pi}_h$. Then there exist constant rotations $\overline R _h\in SO(3)$ and constant vectors $c^h\in \R^3$ such that, introducing the normalized deformations
		\begin{equation}\label{eq:normdef}
			\widetilde{y}^h:=\overline R_h^T y^h-c_h,
		\end{equation}
		the following convergence properties as $h\to 0$ hold, up to a subsequence that we do not relabel: 
		\begin{itemize}
			\item[(i)] the sequence $\widetilde y^h$ converges strongly in $W^{1,2}(S^{h_0}; \mathbb{R}^3)$ to $\varrho^S$, where $\varrho^S$ is the projection onto $S$ introduced in \eqref{eq:projection};
			
			\item[(ii)] the scaled averaged displacements $V^h_2[\widetilde{y}^h]\colon S\to \R^3$, defined by
			\begin{equation}
				\label{eq:avdisp}
				V^h_2[\widetilde{y}^h] (z) := \frac{1}{h} \dashint_{-h_0/2}^{h_0/2}\left(\widetilde{y}^h(z + t \mathbf{n}(z)) - z \right) \d t,
			\end{equation}        
			converge strongly in $W^{1,2}(S;\mathbb{R}^3)$ to some infinitesimal isometry $V \in \mathcal{V}(S)$;
			
			\item[(iii)] the scaled strains $ \frac{1}{h} \sym \nabla^S V^h_2 [\widetilde{y}^h]$ converge weakly in $ L^2(S, \mathbb{R}^{2 \times 2}) $ to some finite strain $ B \in \mathcal{B}(S) $;
			
			\item[(iv)] the limit pair $(V,B)$ minimizes the functional $\mc E^\pi_{\rm vK}$ introduced in \eqref{eq:envKpi}.
			
		\end{itemize}
	\end{thm}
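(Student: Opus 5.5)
The plan is the usual $\Gamma$-convergence scheme at the energy scale $h^{4}$: a compactness statement for low-energy sequences, a liminf inequality, and a recovery sequence. Convergence of $2$-minimizers then follows by the standard argument.

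\textbf{Step 1: a priori bound.} The first task is to show $\mc E^{2,\pi}_h(y^h)\ge -Ch^4$ for every $y^h$, and that $2$-minimizers satisfy $\frac1h\int_{S^h}W(\nabla u^h)\le Ch^5$. The upper bound $\inf\mc E^{2,\pi}_h\le h^2\pi\,|S^h|/h$ is obtained by testing with $u=\mathrm{id}$. We then subtract the constant $h^{2}\pi\,\frac{|S^h|}{h}$, which is the value of the pressure term at the identity. Exactly as in \cite{KR25}, we write the remaining pressure contribution as
\[
h^{2}\pi\int_S\dashint\big(\det\nabla_h y-1\big)\mc D_h\,\d t\,\d\sigma .
\]
To control it:
\begin{itemize}
\item On the set where $|\det\nabla_h y-1|\le1$, we use \eqref{compare} and Young's inequality: $h^2|\pi||\det-1|\le \tfrac c2 |\det-1|^2+Ch^4$.
\item On the complement, we use either \eqref{linearcontrol}, which absorbs the term into $W$, or \eqref{positivity}, under which $\pi(\det\nabla u-1)\ge-\pi$ there and the bad set has measure controlled by the energy.
\end{itemize}
This yields $\int W(\nabla_h y^h)\mc D_h\le C h^4+2\big(\mc E^{2,\pi}_h(y^h)-\text{const}\big)$. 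The two bounds together give elastic energy $O(h^4)$.

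\textbf{Step 2: compactness.} With the elastic energy of order $h^4$, we apply the compactness theorem of \cite{LMP10} (see also \cite[Ch.~5]{Lew13}), which rests on geometric rigidity. It provides the rotations $\overline R_h$ and constants $c_h$ together with (i)–(iii). It also provides rotation fields $R^h$ with $\|\nabla_h\widetilde y^h-R^h\|_{L^2}\le Ch^2$, and scaled strains
\[
G^h=h^{-2}\big((R^h)^T\nabla_h\widetilde y^h-\mathrm{Id}\big)\rightharpoonup G \quad\text{in }L^2,
\]
whose tangential part is $B-\frac12(A^2)_{\rm tan}+t(\nabla^S(A\mathbf n)-A\Pi)_{\rm tan}$.

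\textbf{Step 3: liminf inequality, the main obstacle.} The key expansion is
\[
\det\nabla_h y=\det(\mathrm{Id}+h^2G^h)=1+h^2\tr G^h+O(h^4|G^h|^2),
\]
valid on the set where $h|G^h|\le1$ (truncation set). The pressure term then becomes $h^4\pi\int\dashint\tr G^h\,\mc D_h$ plus a remainder, and $\mc D_h\to1$ uniformly. The elastic term is bounded below by $\frac{h^4}2\int Q_3(G^h)$, up to $o(h^4)$, by Taylor expansion on the truncation set. This gives
\[
\liminf h^{-4}(\mc E-\text{const})\ge\int_S\dashint\Big(\tfrac12Q_3(G)+\pi\tr G\Big).
\]
We split $\tr G=\tr_z G_{\rm tan}+\langle G\mathbf n,\mathbf n\rangle$ and minimize pointwise over the normal column $a=G\mathbf n$; this produces $\frac12Q_2^{\pi}(z,G_{\rm tan})$ plus the term $\pi\tr_z G_{\rm tan}$. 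By \eqref{Q2pivalue}, the $t$-odd part integrates out, and the $t^2$ part reproduces the factor $\frac1{24}$. What remains is $\mc E^\pi_{\rm vK}$ up to the additive constant $\frac{\pi^2}2\int\kappa$. This constant is harmless for minimization, and it is also attained by the recovery sequence. The genuine difficulty is the remainder outside the truncation set. We handle it as in \cite{KR25}:
\begin{itemize}
\item where $|\det-1|>1$, via Step 1 and either \eqref{linearcontrol} or \eqref{positivity};
\item on the intermediate region $h|G^h|>1$ with $|\det-1|\le1$, the measure is $O(h^2)$ and $|h^2\pi(\det-1)|\le C h^2$. This is not yet sufficient. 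We instead use $|\det-1|\le C\,\mathrm{dist}(\nabla_h y,SO(3))$ there, together with Cauchy–Schwarz and the smallness of the $L^2$-norm of $G^h$ on that set, which gives $o(h^4)$. Equi-integrability of $|G^h|^2$ is not available, so instead of pure Cauchy–Schwarz we may replace $\det$ by its quadratic-growth cutoff as done in \cite{KR25}.
\end{itemize}

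\textbf{Step 4: recovery sequence and conclusion.} For smooth $V$ and $B=\sym\nabla^S w$ (density in $\mc V(S)\times\mc B(S)$ as in \cite{LMP10}), we take the ansatz of \cite{LMP10},
\[
y^h=z+hV+h^2w+t\big(\mathbf n+hA\mathbf n\big)+h^2 t\,d_0(z)+h^2\tfrac{t^2}2\,d_1(z),
\]
with the same quadratic corrections. The normal corrections $d_0,d_1$ are chosen as the pointwise minimizers $a^\pi$ from the proof of \eqref{Q2pivalue}, now including the $\pi\mathbf n$ shift, and are then mollified. The expansions of Step 3 hold exactly (uniform smallness of $G^h$), giving equality in the limit. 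Finally, comparing a $2$-minimizer with recovery sequences and applying the liminf inequality yields (iv).
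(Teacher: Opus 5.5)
\textbf{Main gap: the reduction to smooth $V$ in Step 4.} Steps 1--3 follow the paper's route, and testing with the identity to get $\inf\overline{\mc E}^{2,\pi}_h\le 0$ is a harmless shortcut. Step 4, however, has a genuine gap. You reduce to smooth $V\in\mc V(S)$ ``by density as in \cite{LMP10}''. Density of regular infinitesimal isometries in $\mc V(S)$ for the $W^{2,2}$ topology is not available in general, and in particular not for an arbitrary $C^{1,1}$ surface. The construction of \cite{LMP10} and \cite[Theorem 6.3]{Lew13}, which the paper follows, does not rely on it. (Density in the $B$-variable is unproblematic, by the very definition of $\mc B(S)$.)

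Instead, $V$ is replaced by Lusin-type truncations $v^h\in W^{2,\infty}(S;\R^3)$ as in \eqref{eq:approV}, with $h\|v^h\|_{W^{2,\infty}}\le\varepsilon$, $|\{v^h\neq V\}|=o(h^2)$, and \eqref{eq:symvanish}. These are not infinitesimal isometries, so $A\mathbf n$ in your ansatz must be replaced by $\Pi v^h_{\rm tan}-\nabla^S\langle v^h,\mathbf n\rangle$. Moreover, one only gets $h^2\|K^h\|_{L^\infty}\le C\varepsilon$, not uniform smallness. Hence your claim that ``the expansions of Step 3 hold exactly'' fails in general. The elastic energy is recovered only up to $o_\varepsilon(1)$, and the pressure term needs a separate one-sided estimate. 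The paper obtains it as follows:
\begin{itemize}
\item it writes $\det\nabla_h y^h=\sqrt{\det(\operatorname{Id}_3+h^2K^h)}$ via polar decomposition;
\item it uses $\sqrt{1+\lambda}\le 1+\lambda/2$ together with \eqref{eq:boundedKh} and the strong convergence \eqref{eq:convergeKh};
\item it closes with a diagonal argument in $\varepsilon$.
\end{itemize}

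\textbf{Smaller points.}
\begin{itemize}
\item \emph{Intermediate region in Step 3.} Your worry about $E_h:=\{h|G_h|>1\}\cap\{|\det\nabla_h y^h-1|\le 1\}$ is unfounded, and no equi-integrability of $|G_h|^2$ is needed. On $E_h$ one has $|\det\nabla_h y^h-1|\le C\,\mathrm{dist}(\nabla_h y^h,SO(3))\le Ch^2|G_h|$. Hence $h^{-2}\int_{E_h}|\det\nabla_h y^h-1|\d x\le C\|G_h\|_{L^2}|E_h|^{1/2}\to 0$. The smallness comes from $|E_h|\to 0$, exactly as in the paper, which uses $\|\det\nabla_h y^h-1\|_{L^2(\Omega_h^-)}\le Ch^2$ and $|S^{h_0}\setminus B_h^\alpha|\to 0$. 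So no cutoff from \cite{KR25} is required.
\item \emph{Choice of $d_1$ in Step 4.} Make sure only $d_0$ carries the $\pi\mathbf n$ shift. Since the $t$-odd part of the pressure term integrates to zero, $d_1$ must be the pressure-free minimizer for $Q_2(z,K_2)$. Taking $a^\pi$ there adds the positive constant $-\frac{\pi^2}{24}\int_S\kappa\d\sigma$ to the upper bound. That destroys the matching with the liminf, and hence the minimality in (iv).
\item \emph{Scalings.} In the physical thickness variable, which your other terms use, the quadratic correction must be $h\frac{t^2}{2}d_1$, not $h^2\frac{t^2}{2}d_1$; otherwise it does not enter the strain at order $h^2$. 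Also, the elastic bound in Step 1 is $Ch^4$ per unit thickness, not $Ch^5$.
\end{itemize}
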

	
	\begin{thm}[\textbf{Linearized von K\'arm\'an theory ($\boldsymbol{\alpha>2}$)}]
		\label{thm:linvonkarman}
		Assume that $W$ satisfies \eqref{frameindif}-\eqref{continuity}, together with either \eqref{positivity} or \eqref{linearcontrol}, and let $y^h$ be an $\alpha$-minimizer of
		${\mc E}^{\alpha,\pi}_h$. Then there exist constant rotations
		$\overline R _h\in SO(3)$ and constant vectors $c^h\in \R^3$ such
		that the sequence of normalized deformations $\widetilde y^h$ defined in \eqref{eq:normdef} fulfils properties $(i)$, $(ii)$ and $(iii)$ of Theorem \ref{thm:vonkarman}, with $\alpha$-scaled averaged displacements
		\begin{equation}\label{eq:avdispalpha}
			V^h_\alpha[\widetilde{y}^h] (z) := \frac{1}{h^{\alpha-1}} \dashint_{-h_0/2}^{h_0/2}\left( \widetilde{y}^h(z + t \mathbf{n}(z)) - z \right) \d t,
		\end{equation}
		in place of \eqref{eq:avdisp}. 
		
		Moreover, the limit pair $(V,B)$ obtained in $(ii)$ and $(iii)$ minimizes the functional $\mc E^\pi_{\rm vK,lin}$ introduced in \eqref{eq:envKlinpi}.
	\end{thm}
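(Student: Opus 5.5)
The plan is to prove Theorem \ref{thm:linvonkarman} by a $\Gamma$-convergence argument for the rescaled functionals $h^{-2\alpha}\big(\mc E_h^{\alpha,\pi}-h^{\alpha}\pi\int_S\dashint \mc D_h\,\d t\,\d\sigma\big)$. This mirrors the case $\alpha=2$, but we take the pressureless compactness and lower bound from \cite{LMP10}. Subtracting $h^\alpha\pi|S^h|/h$, which is the pressure work of the identity, removes the leading constant. The scaled pressure term then becomes
\[
h^{-\alpha}\pi\int_S\dashint_{-h_0/2}^{h_0/2}\big(\det\nabla_h y-1\big)\mc D_h\,\d t\,\d\sigma.
\]

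\textbf{Step 1: energy bound and compactness.} First I will show that $\inf\mc E_h^{\alpha,\pi}$ minus the constant is $O(h^{2\alpha})$. The upper bound comes from testing with $\varrho^S$ composed with the identity. For the lower bound, split $\det\nabla_h y-1$ on the sets $\{|\det-1|\le1\}$ and $\{|\det-1|>1\}$. On the first set, \eqref{compare} and Young's inequality give
\[
h^{-\alpha}|\pi|\,|\det-1|\le \tfrac{c}{2}h^{-2\alpha}W+Ch^{0}.
\]
On the second set, either \eqref{linearcontrol} gives $h^{-\alpha}|\det-1|\le h^{-\alpha}W/c$, which is absorbed because $h^{-\alpha}\ll h^{-2\alpha}$, or \eqref{positivity} makes the term nonnegative when $\det\nabla_h y>1$, while $0<\det\le$ bounded on the complement. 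Hence $\frac1h\int_{S^h}W(\nabla u)\le Ch^{2\alpha}$. The geometric rigidity compactness of \cite{LMP10} (Lemmas on approximating rotations $R^h$ with $\|\nabla_h y-R^h\|_{L^2}\le Ch^{\alpha-1}$ and $\|R^h-\overline R_h\|\le Ch^{\alpha-2}$) then yields $\overline R_h$, $c_h$, and properties (i)--(iii) with $V\in\mc V(S)$ and $B\in\mc B(S)$.

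\textbf{Step 2: liminf inequality.} Write $\nabla_h\widetilde y^h=R^h(\mathrm{Id}+h^{\alpha-1}G^h)$ with $G^h\rightharpoonup G$ in $L^2$. Following \cite{LMP10}, for $\alpha>2$ one has $G_{\rm tan}(z,t)=B+\tfrac{t}{h_0}(\nabla^S(A\mathbf n)-A\Pi)_{\rm tan}$ in the suitable scaled thickness variable. The key point is the pressure term. Expand
\[
\det(\mathrm{Id}+h^{\alpha-1}G^h)=1+h^{\alpha-1}\tr G^h+O(h^{2\alpha-2}|G^h|^2).
\]
This gives $h^{-\alpha}(\det-1)=h^{-1}\tr G^h+\dots$, but that is one power too singular, so the natural scaling must be rechecked. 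I expect the displacements to be $h^{\alpha-1}$ and the strains $h^{\alpha}$, so that $G^h$ should really be normalized by $h^{\alpha}$ in $\sym$. I will use the finer decomposition $\sym G^h\sim h\,\cdot$ (limit strain), that is, the strain $\frac1{h^{\alpha}}(\sqrt{\nabla_h y^T\nabla_h y}-\mathrm{Id})\rightharpoonup \mc G$. Then $h^{-\alpha}(\det-1)\to\tr\mc G$ weakly, using the truncation of \cite{KR25} on the bad set, and the quadratic remainder vanishes because $h^{\alpha}\cdot h^{-\alpha}\cdot h^{\alpha}\to0$. The energy density then becomes
\[
\tfrac12Q_3(\mc G)+\pi\tr\mc G=\tfrac12Q_3(\overline{\mc G_{\rm tan}}+a\otimes\mathbf n)+\pi\langle a,\mathbf n\rangle+\pi\tr_z\mc G_{\rm tan}\ge\tfrac12Q^\pi_2(z,\mc G_{\rm tan})+\pi\tr_z\mc G_{\rm tan}.
\]
I will integrate in $t$. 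The linear part of \eqref{Q2pivalue} averages out the odd bending term, while the constant $\pi^2\kappa$ is independent of $(V,B)$. The cross term $\frac{1}{h}\int t\,\mc D_h$ contributes only constants, by \eqref{eq:Dh}. This yields $\mc E^\pi_{\rm vK,lin}(V,B)$ plus a constant. Weak lower semicontinuity follows by convexity.

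\textbf{Step 3: recovery sequence and conclusion.} For $V$ and $B=\lim\sym\nabla^S w^h$, where I take $w^h$ smooth by density, use
\[
y^h(z+t\mathbf n)=z+h^{\alpha-1}V+h^{\alpha}w^h+t\big(\mathbf n+h^{\alpha-1}A\mathbf n\big)+h^{\alpha}t\,d^h(z)+\tfrac{t^2}{2}h^{\alpha}\widetilde d^h,
\]
with $d^h$ approximating the minimizer $a^\pi$ of \eqref{Q2pi}. The $W^{1,\infty}$ smallness plus \eqref{continuity} gives the convergence of the elastic term, and the Taylor expansion of $\det$ gives the pressure limit. Standard $\Gamma$-convergence then shows that $\alpha$-minimizers converge to minimizers. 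The main obstacle is Step 2: identifying the weak limit of $h^{-\alpha}(\det\nabla_h y-1)$ with $\tr$ of the limit strain despite only $L^2$ control. I would first try the truncation set $\{|\nabla_hy-R^h|\le h^{\alpha/2}\}$ from \cite{KR25}, whose complement has small measure, and control the contribution there via \eqref{positivity} or \eqref{linearcontrol}.
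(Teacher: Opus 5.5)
Your Steps 1 and 2 follow essentially the paper's route: the energy bound from \eqref{compare} together with \eqref{positivity} or \eqref{linearcontrol}, rigidity and compactness from \cite{LMP10}, truncation on $\{h^{\alpha/2}|G_h|\le 1\}$, expansion of the determinant, and the pointwise bound $Q_3(G)+2\pi\tr G\ge Q_2^\pi(z,G_{\rm tan})+2\pi\operatorname{tr}_z G_{\rm tan}$. The genuine gap is Step 3: your ansatz is not a recovery sequence, for three reasons.

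First, you insert $V\in W^{2,2}$ itself, so $h^{\alpha-1}A$ and $t\,h^{\alpha-1}\nabla^S(A\mathbf n)$ are not small in $L^\infty$. The ``$W^{1,\infty}$ smallness'' you invoke is therefore false. Since $W$ is only $C^2$ near $SO(3)$ and has no growth bound from above, neither the Taylor expansion of $W$ nor the control of the higher-order terms of $\det\nabla_h y^h$ is justified; the paper needs $h^\alpha\|K^h\|_{L^\infty}\le C\eps$ for the latter. You need the truncated fields $v^h$ of \cite{LMP10} satisfying \eqref{eq:approV} and \eqref{eq:symvanish}, followed by a diagonal argument in $\eps$.

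Second, your term quadratic in thickness has the wrong order. In physical thickness it must be $\frac{t^2}{2}h^{\alpha-1}d^{1,h}$, which is $t^2\frac{h^{\alpha+1}}{2h_0^2}d^{1,h}$ in \eqref{eq:recover}, written in rescaled thickness. With your factor $h^\alpha$, its contribution to $\partial_t u$ is $O(h^{\alpha+1})$ and disappears at the strain scale $h^\alpha$. Moreover $(\nabla^S(A\mathbf n)-A\Pi)\tau=(\partial_\tau A)\mathbf n$ is tangent. So the bending part of the limit strain is exactly $\frac{t}{h_0}\overline{K_2}$, with no normal correction $d^1\otimes\mathbf n$. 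You then obtain $\frac1{24}\int_S Q_3(\overline{K_2})\d\sigma$, which in general strictly exceeds $\frac1{24}\int_S Q_2(z,K_2)\d\sigma$; in the isotropic case the excess is $\frac1{24}\int_S\frac{\lambda^2}{2\mu+\lambda}(\operatorname{tr}_z K_2)^2\d\sigma$.

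Third, only the tangential form $\operatorname{sym}\nabla^S w^h$ is controlled, so the normal components $\langle\partial_\tau w^h,\mathbf n\rangle$ must be cancelled explicitly. That is the role of the term $-t\frac{h^{\alpha+1}}{h_0}\mathbf n^T\nabla^S w^h$ in \eqref{eq:recover}; a $d^h$ that merely approximates $a^\pi$ does not do it. The paper avoids all three problems by taking the construction of \cite{LMP10} verbatim and changing only $d^0$, chosen as the minimizer in \eqref{Q2pi}.

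Two smaller points concern Steps 1--2. Your rigidity exponents are off by one power. From $\frac1h\int_{S^h}W(\nabla u^h)\d x\le Ch^{2\alpha}$ one gets $\|\nabla_h y^h-R_h\|_{L^2}\le Ch^{\alpha}$ and $\|R_h-\overline R_h\|_{L^2}\le Ch^{\alpha-1}$. This is what produced your ``one power too singular'' expansion. With the correct exponents, $G_h=h^{-\alpha}(R_h^T\nabla_h\widetilde y^h-\mathrm{Id}_3)$ is bounded in $L^2$, and $h^{-\alpha}(\det\nabla_h y^h-1)=\tr G_h+h^\alpha\iota_2(G_h)+h^{2\alpha}\det G_h$ on the good set, exactly as in the paper. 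There is also no ``cross term $\frac1h\int t\,\mc D_h$'': since $\mc D_h\to 1$ uniformly, it suffices to pass to the limit in $\int\tr G_h\,\mc D_h$.

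Your alternative strain $\mc G_h:=h^{-\alpha}\big(\sqrt{(\nabla_h y)^T\nabla_h y}-\mathrm{Id}_3\big)$ is a legitimate variant. Where $\det\nabla_h y>0$, one has $|\mc G_h|=h^{-\alpha}\operatorname{dist}(\nabla_h y,SO(3))$ and $\det\nabla_h y=\det(\mathrm{Id}_3+h^\alpha\mc G_h)$ exactly, so no rotations are needed for the pressure term. You must still show that its weak limit is $\operatorname{sym}G$, in order to use the structure \eqref{eq:Gv} of $G_{\rm tan}$ from \cite{LMP10}.
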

	
	The proof of the above two theorems will be based on the well-consolidated tool of $\Gamma$-convergence \cite{Bra02,DM}. The stated convergence of (almost) minimizers is indeed a standard consequence of the following results, written in terms of the functionals
	\begin{equation}\label{eq:barE}
		\overline{\mc E}^{\alpha,\pi}_h(y):=\mc E^{\alpha,\pi}_h(y)-h^\alpha \pi\int_S \dashint_{-h_0/2}^{h_0/2} {\mc D}_h(z,t) \d t \d\sigma(z),
	\end{equation}
	which come in useful to identify the correct scaling of minimizing sequences. In particular, observe that this normalization does not affect minimizers or almost minimizers, in the sense of \eqref{eq:almostmin}, of the original energy $\mc E^{\alpha,\pi}_h$. 
	
	\begin{prop}[\textbf{Compactness}]\label{prop:compact}
		Assume that $W$ satisfies \eqref{frameindif}-\eqref{continuity}. Let $\alpha \geq 2$, and let $y^h$ such that
		$\overline{\mc E}_h^{\alpha,\pi}(y^h)\le C h^{2\alpha}$. Then the following properties hold:
		\begin{itemize}
			\item[(i)] If $\pi>0$, that is if \eqref{positivity} is in force, then
			\begin{equation}\label{eq:boundsdet}
				\begin{aligned}
					\int_{S} \int_{-h_0/2}^{h_0/2} &W(\nabla_h y^h(z+t\mathbf{n}(z))) \mathcal{D}_h(z,t) \d t\d \sigma(z) \\
					&+\int_{\{|\det\nabla_h y^h-1|\le 1\}}|\det\nabla_h y^h(x)-1|^2  \d x \le C h^{2\alpha},
				\end{aligned}           
			\end{equation}
			where $\mathcal{D}_h(z,t)$ has been introduced in \eqref{eq:Dh}.
			
			If instead $\pi<0$, so that
			\eqref{linearcontrol} holds, then, in addition to
			\eqref{eq:boundsdet}, one also has
			\begin{equation}\label{eq:boundsdetbig}
				\int_{\{|\det\nabla_h y^h-1|> 1\}}|\det\nabla_h y^h(x)-1| \d x\le C h^{2\alpha}.
			\end{equation}
			
			\item[(ii)] There exist rotation fields $R_h\in W^{1,2}(S;SO(3))$, constant
			rotations $\overline R _h\in SO(3)$, and constant vectors
			$c_h\in \R^3$ such that the normalized deformations $\widetilde y^h$ in \eqref{eq:normdef}, the $\alpha$-scaled averaged displacements $V^h_\alpha[\widetilde y^h]$ in \eqref{eq:avdispalpha}, the scaled strains $ \frac{1}{h} \sym \nabla^S V^h_\alpha [\widetilde{y}^h]$, and the scaled nonlinear strains defined as
			\begin{equation}\label{eq:strain}
				G_h(z+t\mathbf{n}(z)):=\frac{R_h(z)^T\nabla_h \widetilde{y}^h(z+t\mathbf{n}(z))-\operatorname{Id}_3}{h^\alpha}\quad\text{for }z\in S,|t|<\frac{h_0}{2},
			\end{equation}
			fulfill the following convergences, up to a subsequence that we do not relabel:
			\begin{subequations}\label{eq:convuv}
				\begin{align}
					&\widetilde y^h\xrightarrow[h\to 0]{} \varrho^S,&&\text{strongly in }W^{1,2}(S^{h_0};\R^{3}),\label{eq:convuva}\\
					&V^h_\alpha[\widetilde{y}^h]\xrightarrow[h\to 0]{} V,&&\text{strongly in }W^{1,2}(S;\R^3), \label{eq:convuvb}\\
					&\frac{1}{h} \sym \nabla^S V^h_\alpha [\widetilde{y}^h]\xrightharpoonup[h\to 0]{} B,&&\text{weakly in }L^{2}(S;\R^{2\times 2}), \label{eq:convuvc}\\
					&G_h\xrightharpoonup[h\to 0]{} G,&&\text{weakly in }L^{2}(S^{h_0};\R^{3\times 3}).\label{eq:Ghconv2}
				\end{align}
			\end{subequations}
			Here, $\varrho^S$ is the projection defined in \eqref{eq:projection}, while the limit objects satisfy $V \in \mathcal{V}(S)$,
			\begin{equation}\label{eq:Gv}
				G(z+t\mathbf{n}(z)) \tau= G_0(z)\tau+ \frac{t}{h_0}\left(\nabla^S(A\mathbf{n})(z)-A(z)\Pi(z)\right)\tau                
				\quad\text{for }z\in S,|t|<\frac{h_0}{2}, \tau \in T_z  S,
			\end{equation}
			where
			\begin{equation*}
				G_0(z) :=  \dashint_{-h_0/2}^{h_0/2} G(z+t\mathbf{n}(z)) \d t,
			\end{equation*}
			and $A \in W^{1,2}(S;\R^{3\times 3}_{\rm skew})$ comes from Definition \ref{def1}, while
			\begin{equation}\label{eq:B}
				B(z)=\begin{cases}
					\frac 12(A^2)_{\rm tan}(z)+\sym (G_0)_{\rm tan}(z),&\text{if }\alpha=2,\\
					\sym (G_0)_{\rm tan}(z),&\text{if }\alpha>2.
				\end{cases}
			\end{equation}
			
		\end{itemize}
	\end{prop}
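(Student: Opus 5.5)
The argument has two stages. Part (i) is an energy estimate that absorbs the pressure term into the elastic energy. Part (ii) then follows by quoting the compactness theory of \cite{LMP10} (see also \cite[Chapter 5]{Lew13}) for the purely elastic energy.

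\emph{Step 1: the energy bounds in (i).} By Lemma \ref{lem:changeofvarables} and \eqref{eq:barE} we can write
\begin{equation*}
\overline{\mc E}^{\alpha,\pi}_h(y^h)=\int_S\dashint_{-h_0/2}^{h_0/2}\Big(W(\nabla_h y^h)+h^\alpha\pi(\det\nabla_h y^h-1)\Big)\mc D_h\d t\d\sigma .
\end{equation*}
Set $d_h:=\det\nabla_h y^h-1$ and split $S^{h_0}$ into $\{|d_h|\le1\}$ and $\{|d_h|>1\}$; the weights $\mc D_h$ are harmless by \eqref{fixed-h0}.

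On $\{|d_h|\le 1\}$, \eqref{compare} and Young's inequality give
\begin{equation*}
h^\alpha|\pi||d_h|\le \tfrac{c}{4}|d_h|^2+Ch^{2\alpha}\le \tfrac14 W(\nabla_h y^h)+Ch^{2\alpha}.
\end{equation*}

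On $\{|d_h|>1\}$ the argument depends on the sign of $\pi$:
\begin{itemize}
\item If $\pi>0$, then \eqref{positivity} gives $d_h>-1$, so $d_h>1$ there. The pressure term is then positive and can simply be dropped.
\item If $\pi<0$, then \eqref{linearcontrol} gives $h^\alpha|\pi||d_h|\le \frac14W(\nabla_h y^h)$ for $h$ small, since $h^\alpha|\pi|\le c/4$ eventually.
\end{itemize}
Integrating, we get $\frac12\int W\mc D_h\le \overline{\mc E}^{\alpha,\pi}_h(y^h)+Ch^{2\alpha}\le Ch^{2\alpha}$. Using \eqref{compare} once more yields the $L^2$ bound on $d_h$ in \eqref{eq:boundsdet}, and \eqref{linearcontrol} yields \eqref{eq:boundsdetbig}.

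\emph{Step 2: compactness in (ii).} Step 1 gives $\int_{S^{h_0}}\operatorname{dist}^2(\nabla_h y^h,SO(3))\le Ch^{2\alpha}$ by \eqref{coercivity}. This is exactly the hypothesis of the compactness results in \cite{LMP10}. We apply the geometric rigidity estimate on cylinders of size $h$ and glue the local rotations; after mollification this produces $R_h\in W^{1,2}(S;SO(3))$ with
\begin{equation*}
\|\nabla_h y^h-R_h\|_{L^2}\le Ch^\alpha,\qquad \|\nabla R_h\|_{L^2}\le Ch^{\alpha-1}.
\end{equation*}
We choose $\overline R_h$ as the projection onto $SO(3)$ of the average of $R_h$, and $c_h$ so that the averaged displacement has zero mean. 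These choices give \eqref{eq:convuva}--\eqref{eq:convuvc}. They also show that $h^{1-\alpha}(\overline R_h^TR_h-\operatorname{Id}_3)\to A$ with $A$ skew, and that $V\in\mc V(S)$.

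\emph{Step 3: identifying the limits.} The strains $G_h$ are bounded in $L^2$ by the rigidity estimate, so a subsequence converges weakly. To obtain the affine dependence on $t$ in \eqref{eq:Gv}, we take difference quotients in $t$ and use the formulas of Lemma \ref{lem:changeofvarables}: the tangential derivative of $\partial_{\mathbf n}\widetilde y^h$ identifies $\partial_tG\,\tau$ with $h_0^{-1}(\nabla^S(A\mathbf n)-A\Pi)\tau$. For formula \eqref{eq:B}, we expand the symmetric part of $\overline R_h^TR_h=\operatorname{Id}_3+h^{\alpha-1}A_h+\frac12 h^{2\alpha-2}A_h^2+\dots$. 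When $\alpha=2$ the quadratic term survives at order $h$, producing $\frac12(A^2)_{\rm tan}$; when $\alpha>2$ it is negligible.

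\emph{Main obstacle.} This whole part is established in \cite[Theorem 2.1 and Lemmas therein]{LMP10} (equivalently \cite[Chapter 5]{Lew13}) under only the bound $\int W\le Ch^{2\alpha}$, so I would quote it rather than reprove it. The only new ingredient is therefore Step 1, and its delicate point is controlling the region $\{|d_h|>1\}$ when $\pi<0$, which is exactly why \eqref{linearcontrol} is assumed.
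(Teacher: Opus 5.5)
Your proposal is correct and follows the paper's proof. For part (i), you absorb the pressure term through \eqref{compare} on $\{|\det\nabla_h y^h-1|\le 1\}$; on the complement you either discard it (for $\pi>0$, where \eqref{positivity} forces $\det\nabla_h y^h-1>1$ there) or absorb it via \eqref{linearcontrol} for small $h$ (for $\pi<0$), and part (ii) is then quoted from the purely elastic compactness theory of \cite{LMP10,Lew13}, exactly as in the paper. The only differences are cosmetic: you absorb pointwise with Young's inequality, whereas the paper absorbs at the level of $\|\det\nabla_h y^h-1\|_{L^2(\Omega_h^-)}$ through a quadratic inequality, and since \eqref{eq:strain} is built from $\nabla_h\widetilde y^h$, the rotation field appearing there must be your $\overline R_h^TR_h$ rather than $R_h$ itself.
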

	
	Before stating the $\Gamma$-convergence result, for the sake of brevity we now define the functional
	$\mc E^{\alpha,\pi}: \mathcal{V}(S) \times \mathcal{B}(S) \to \R$ by setting
	\begin{equation}\label{eq:enapi}
		\mc E^{\alpha,\pi}(V,B):=\begin{cases}\displaystyle
			\displaystyle\mc E^\pi_{\rm vK}(V,B)+\frac{\pi^2}{2}\int_S \kappa \d \sigma(z),& \text{if }\alpha=2,\\
			\displaystyle\mc E^\pi_{\rm vK, lin}(V,B)+\frac{\pi^2}{2}\int_S \kappa \d \sigma(z),& \text{if }\alpha>2.
		\end{cases}
	\end{equation}
	Above, the function $\kappa$ has been introduced in \eqref{Q2pivalue}, while the von K\'arm\'an-type functionals with pressure $\mc E^\pi_{\rm vK}$ and $\mc E^\pi_{\rm vK,lin}$ have been introduced in \eqref{eq:envKpi} and \eqref{eq:envKlinpi}, respectively.
	
	\begin{prop}[\textbf{$\Gamma$-convergence}]
		\label{prop:gammaconvergence}
		Assume that $W$ satisfies \eqref{frameindif}-\eqref{continuity}, together with either \eqref{positivity} or \eqref{linearcontrol}. Then the following two properties hold for any $\alpha\ge 2$:
		\begin{itemize}
			\item for every sequence $\{y^h\}_h\subseteq W^{1,2}(S^{h_0};\R^3)$
			such that $\overline{\mc E}^{\alpha,\pi}_h(y^h)\le Ch^{2\alpha}$, there exist rotation fields $R_h\in W^{1,2}(S;SO(3))$, constant rotations $\overline R _h\in SO(3)$, and constant vectors
			$c_h\in \R^3$ for which the convergences stated in \eqref{eq:convuv} hold, and
			\begin{equation}\label{eq:Gliminf}
				\mc E^{\alpha,\pi}(V,B)\le \liminf\limits_{h\to 0}\frac{1}{h^{2\alpha}}\overline{\mc E}^{\alpha,\pi}_h(y^h);
			\end{equation}
			
			\item for every
			$(V,B)\in \mathcal{V}(S) \times \mathcal{B}(S)$, there
			exist a sequence $\{y^h\}_h\subseteq W^{1,2}(S^{h_0};\R^3)$, rotation fields $R_h\in W^{1,2}(S;SO(3))$, constant rotations $\overline R _h\in SO(3)$, and constant vectors
			$c_h\in \R^3$such that the convergences stated in \eqref{eq:convuva}, \eqref{eq:convuvb}, and \eqref{eq:convuvc} hold, and
			\begin{equation}\label{eq:Glimsup}
				\limsup\limits_{h\to 0}\frac{1}{h^{2\alpha}}\overline{\mc E}^{\alpha,\pi}_h(y^h)\le  \mc E^{\alpha,\pi}(V,B).
			\end{equation}
		\end{itemize}
	\end{prop}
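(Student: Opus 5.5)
The proof splits into the liminf inequality \eqref{eq:Gliminf} and the recovery sequence \eqref{eq:Glimsup}. In both parts we reduce to the pressureless analysis of \cite{LMP10} and add a separate treatment of the pressure term.

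\emph{Liminf.} By Proposition \ref{prop:compact} we get the rotations $R_h,\overline R_h$, the vectors $c_h$, and the convergences \eqref{eq:convuv}. We then rewrite $\overline{\mc E}^{\alpha,\pi}_h(y^h)$ using $\det\nabla_h\widetilde y^h=\det(\operatorname{Id}_3+h^\alpha G_h)$, which holds because $\det R_h=\det\overline R_h=1$. Expanding,
\[
\det(\operatorname{Id}_3+h^\alpha G_h)-1=h^\alpha\tr G_h+h^{2\alpha}\operatorname{cof}_2(G_h)+h^{3\alpha}\det G_h ,
\]
where $\operatorname{cof}_2$ is the sum of principal $2\times2$ minors. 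The term $\tr G_h$ carries a factor $h^{\alpha}\pi\cdot h^\alpha=h^{2\alpha}\pi$, so it is exactly of the energy order and converges weakly. The higher-order terms are not controlled in $L^1$ by weak $L^2$ convergence. Following \cite{KR25}, we handle them by truncation: split $S^{h_0}$ into the good set $\{|h^\alpha G_h|\le h^{\alpha/2}\}$ and its complement. On the good set we Taylor expand $W$ as in \cite{LMP10}, so $W(\operatorname{Id}_3+h^\alpha G_h)\ge \frac{h^{2\alpha}}{2}Q_3(G_h)-o(h^{2\alpha})|G_h|^2$, and the determinant remainder is $o(h^{2\alpha})$ uniformly there. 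On the bad set, whose measure tends to zero, $W\ge0$ together with \eqref{eq:boundsdet}--\eqref{eq:boundsdetbig} gives $h^{-2\alpha}\int|\det-1|\to0$ on it; under \eqref{positivity}, $\pi>0$ and $\det>0$ let us instead drop the negative part appropriately. This yields
\[
\liminf \frac{1}{h^{2\alpha}}\overline{\mc E}^{\alpha,\pi}_h(y^h)\ge \liminf \int_S\dashint \Big(\tfrac12 Q_3(\chi_hG_h)+\pi\tr(\chi_hG_h)\Big)\d t\d\sigma ,
\]
where $\chi_h$ is the indicator of the good set and $\mc D_h\to1$ uniformly by \eqref{fixed-h0}. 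By weak lower semicontinuity of the convex integrand, the right-hand side is at least $\int_S\dashint\big(\frac12Q_3(G)+\pi\tr G\big)$. Pointwise we then minimize over the normal components: write $\sym G=\overline{(G)_{\rm tan}}+\sym(a\otimes\mathbf n)$, so $\tr G=\tr_z G_{\rm tan}+\langle a,\mathbf n\rangle$, and the definition \eqref{Q2pi} gives $\frac12Q_3(G)+\pi\tr G\ge \frac12Q_2^\pi(z,G_{\rm tan})+\pi\tr_zG_{\rm tan}$. Inserting \eqref{eq:Gv}, using \eqref{Q2pivalue} and $\dashint t/h_0\,\d t=0$, the cross terms in $t$ vanish and $\dashint (t/h_0)^2=1/12$. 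With \eqref{eq:B} this gives exactly $\mc E^{\alpha,\pi}(V,B)$, since $\mathfrak L$ and $\tr_z$ are linear.

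\emph{Limsup.} By density, which requires continuity of $\mc E^{\alpha,\pi}$ on $\mc V(S)\times\mc B(S)$ in the strong topologies involved, we take $V$ smooth enough and $B=\lim\sym\nabla^S w_\epsilon$ with smooth $w_\epsilon$. We use the ansatz of \cite{LMP10}:
\[
y^h(z+t\mathbf n)=z+t\tfrac{h}{h_0}\mathbf n+h^{\alpha-1}V+h^{\alpha}w+h^{\alpha-1}t\tfrac{h}{h_0}A\mathbf n+h^{\alpha}t\tfrac{h}{h_0}d_0(z)+\tfrac{h^{2}t^2}{2h_0^2}h^{\alpha-1}d_1(z),
\]
with a further $\frac12A^2$-type correction when $\alpha=2$. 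The only change from \cite{LMP10} is that the warping vectors $d_0,d_1$ are chosen as the pointwise minimizers $a^\pi$ of \eqref{Q2pi}, rather than of \eqref{eq:quad2'}. Concretely, $d_0$ is tied to $a^\pi(z,B-\frac12(A^2)_{\rm tan})$ and $d_1$ to $a^0$ for the bending term, because the pressure term is linear and averages out in $t$. After mollifying the smooth approximations, $\nabla_h y^h=R(\operatorname{Id}_3+h^\alpha G_h)$ with $G_h$ uniformly bounded and converging strongly, so Taylor expansion of $W$ and dominated convergence give the elastic part. The determinant expansion above then gives the pressure part, with the remainder bounded by $h^{3\alpha}\|G_h\|_\infty^2$. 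A diagonal argument concludes.

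\emph{Main obstacle.} The main obstacle is the liminf control of the nonlinear determinant remainder on the bad set, where $G_h$ is only weakly $L^2$. This is precisely where \eqref{positivity} or \eqref{linearcontrol} must be used, adapting the truncation argument of \cite{KR25} to the curved geometry via $\mc D_h$.
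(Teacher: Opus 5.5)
Your liminf argument is essentially the paper's: the same truncation set $\{h^{\alpha/2}|G_h|\le 1\}$, the same determinant expansion, weak convergence of $\chi_hG_h$, and the same pointwise minimization over the normal vector $a$ leading to $Q_2^\pi$ and then to \eqref{Q2pivalue}. There are two slips there.
\begin{itemize}
\item The quantity that must vanish on the bad set is $h^{-\alpha}\int|\det\nabla_h y^h-1|$, because the pressure prefactor is $h^\alpha\pi/h^{2\alpha}$. The quantity $h^{-2\alpha}\int|\det\nabla_h y^h-1|$ is not controlled by \eqref{eq:boundsdet}.
\item Under \eqref{positivity} you cannot drop the negative part. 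What you discard is the nonnegative contribution of $\{\det\nabla_h y^h>2\}$. The negative part lies in $\Omega_h^-$ and is handled by Cauchy--Schwarz and \eqref{eq:boundsdet}.
\end{itemize}

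The genuine gap is in the limsup. You reduce to ``$V$ smooth enough'' by density. On a curved surface this needs $\mathcal V(S)\cap W^{2,\infty}(S;\R^3)$ to be dense in $\mathcal V(S)$ for the $W^{2,2}$ norm. That is a nontrivial property, established only for particular classes of surfaces, and not available for a general $\mathcal C^{1,1}$ surface. Continuity of $\mc E^{\alpha,\pi}$ does not supply it. Mollification does not either: unlike for plates, $\sym\nabla^S V=0$ is a constraint with variable coefficients (through $\Pi$), which convolution destroys. Nor can you insert a non-isometric approximant into your ansatz. It uses $A\mathbf n$, which needs an associated skew field. Moreover, a defect $\sym\nabla^S v\neq0$ enters the strain at order $h^{\alpha-1}$ and costs energy of order $h^{2\alpha-2}\gg h^{2\alpha}$, unless $\|\sym\nabla^S v\|_{L^2}=o(h)$.

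The paper, following \cite{LMP10}, avoids density altogether.
\begin{itemize}
\item For each $h$ it takes a $W^{2,\infty}$ truncation $v^h$ of $V$ satisfying \eqref{eq:approV}. Thus $v^h=V$ outside a set of measure $o(h^{2\alpha-2})$, and \eqref{eq:symvanish} holds.
\item Its ansatz \eqref{eq:recover} uses $\Pi v^h_{\rm tan}-\nabla^S\langle v^h,\mathbf n\rangle$ in place of $A\mathbf n$.
\item The price is that your uniform bound on $G_h$ is lost. One only has $h^\alpha\|K^h\|_{L^\infty}\le C\eps$ and \eqref{eq:kh4}.
\item Consequently both the Taylor expansion of $W$ and the determinant expansion produce errors $o_\eps(1)$, removed by a diagonal argument in $\eps$. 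For the determinant, the paper uses the polar decomposition and $\sqrt{1+\lambda}\le1+\lambda/2$, with remainder $Ch^\alpha(1+\eps)\|K^h\|_2^2$.
\end{itemize}
Your choice of warping fields carries over unchanged: $d_0$ from the minimizer in \eqref{Q2pi} and $d_1$ from the one in \eqref{eq:quad2'}, exactly as in the paper.
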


	\section{Compactness and $\Gamma$-convergence analysis}
	\label{sec:proof}
	
	This section is devoted to the proofs of the compactness and the $\Gamma$-convergence results. We stress that Theorems \ref{thm:vonkarman} and \ref{thm:linvonkarman} then follow by means of standard general results in $\Gamma$-convergence \cite{DM}. We begin with the compactness property.
	\begin{proof}[Proof of Proposition \ref{prop:compact}]
		All the statements contained in $(ii)$ are a consequence of \cite[Theorems 5.23, 5.25 and 5.26]{Lew13} (see also \cite{LMP10}), once we prove that
		\begin{equation*}
			\int_{S} \int_{-h_0/2}^{h_0/2} W(\nabla_h y^h(z+t\mathbf{n}(z))) \mathcal{D}_h(z,t) \d t\d \sigma(z)  \le C h^{2\alpha}.
		\end{equation*}
		Since the above estimate is a byproduct of \eqref{eq:boundsdet}, in order to conclude it is enough to show the validity of $(i)$.
		
		To this aim, we first consider the case $\pi>0$. For the sake of clarity, we define the set
		\begin{equation}\label{eq:setdet}
			\Omega_h^-:=\left\{x=z+t\mathbf{n}(z) \in S^{h_0}: \, \left|\det\nabla_h y^h (x)-1\right|\le 1\right\}.
		\end{equation}
		From the definition \eqref{eq:barE} of $\overline{\mc E}_h^{\alpha,\pi}$, recalling the change of variables formula in Lemma \ref{lem:changeofvarables} we notice that
		\begin{equation}\label{eq:firstest}
			\begin{aligned}
				&\int_{S}\int_{-h_0/2}^{h_0/2} W(\nabla_h y^h(z+t\mathbf{n}(z)))\mathcal{D}_h(z,t)\d t \d\sigma(z)\\
				\le & C h^{2\alpha}+h^\alpha\pi\int_{S^{h_0} }\Big(1-\det\nabla_h y^h(x)\Big)\frac{\mathcal{D}_h(z,t)}{\det(\operatorname{Id}_3+t\Pi(z))}\d x.
			\end{aligned}      
		\end{equation}
		By means of \eqref{eq:bdh0} and \eqref{fixed-h0}, since $\pi$ is positive we can continue the above inequality, obtaining
		\begin{equation}\label{eq:1est}
			\begin{aligned}
				\int_{S}\int_{-h_0/2}^{h_0/2} W(\nabla_h y^h(z+t\mathbf{n}(z)))\mathcal{D}_h(z,t)\d t \d\sigma(z)
				\le& Ch^{2\alpha}+ Ch^\alpha\int_{ \Omega_h^-}|\det\nabla_h y^h-1| \d x \\
				\le& Ch^{2\alpha}+ Ch^\alpha \|\det\nabla_h y^h-1\|_{L^2(\Omega_h^-)}.
			\end{aligned}   
		\end{equation}
		Therefore, by using \eqref{compare}, we infer
		\begin{equation*}
			\|\det\nabla_h y^h-1\|_{L^2(\Omega_h^-)}^2\le Ch^{2\alpha}+ Ch^\alpha \|\det\nabla_h y^h-1\|_{L^2(\Omega_h^-)},
		\end{equation*}
		so that
		\begin{equation}\label{eq:goal2}
			\|\det\nabla_h y^h-1\|_{L^2(\Omega_h^-)}^2\le Ch^{2\alpha}.
		\end{equation}
		Combining \eqref{eq:goal2} with \eqref{eq:1est}, we finally deduce \eqref{eq:boundsdet}.
		
		We now consider the case $\pi<0$, in which \eqref{linearcontrol} holds. From \eqref{eq:firstest} we first deduce
		\begin{equation}\label{eq:goal'}
			\begin{aligned}
				&\int_{S}\int_{-h_0/2}^{h_0/2} W(\nabla_h y^h(z+t\mathbf{n}(z)))\mathcal{D}_h(z,t)\d t \d\sigma(z)\\
				\le &  Ch^{2\alpha}+ Ch^\alpha \|\det\nabla_h y^h-1\|_{L^2(\Omega_h^-)}+Ch^{\alpha}\|\det\nabla_h y^h-1\|_{L^1(S^{h_0}\setminus \Omega_h^-)},
			\end{aligned}	
		\end{equation}
		whence, by means of \eqref{compare} and \eqref{linearcontrol}, we infer
		\begin{align*}
			&\|\det\nabla_h y^h-1\|_{L^2(\Omega_h^-)}^2+\|\det\nabla_h y^h-1\|_{L^1(S^{h_0}\setminus \Omega_h^-)}\\
			\le& Ch^{2\alpha}+ Ch^\alpha \|\det\nabla_h y^h-1\|_{L^2(\Omega_h^-)}+Ch^{\alpha}\|\det\nabla_h y^h-1\|_{L^1(S^{h_0}\setminus \Omega_h^-)}.
		\end{align*}	
		Simple manipulations of the above inequality finally yield
		\begin{equation}\label{eq:newbound}
			\|\det\nabla_h y^h-1\|_{L^2(\Omega_h^-)}^2+\|\det\nabla_h y^h-1\|_{L^1(S^{h_0}\setminus \Omega_h^-)} \le Ch^{2\alpha}.
		\end{equation}
		Estimates \eqref{eq:boundsdet} and \eqref{eq:boundsdetbig} are now a byproduct of \eqref{eq:goal'} and \eqref{eq:newbound}, and we conclude.
	\end{proof}
	
	We now prove the $\Gamma$-convergence result stated in Proposition \ref{prop:gammaconvergence}. 
	
	\begin{proof}[Proof of Proposition \ref{prop:gammaconvergence}]
		We provide the proof in the situation $\pi>0$. The case of negative pressure, where \eqref{linearcontrol} is in force, can be treated analogously, exploiting in addition \eqref{eq:boundsdetbig}.\medskip
		
		\noindent{\underline{\textbf{Liminf inequality.}}} We first prove the $\lim\inf$ inequality \eqref{eq:Gliminf}. Since $\overline{\mc E}^{\alpha,\pi}_h(y^h)\le Ch^{2\alpha}$, we know that $\det\nabla_h y^h$ is positive almost everywhere in $S^{h_0}$ and Proposition \ref{prop:compact} applies. Moreover, by \eqref{eq:normdef} and \eqref{eq:strain}, we may write 
		\begin{equation}\label{eq:yG}
			\nabla_h y^h(z+t\mathbf{n}(z))=\overline R_h R_h(z)(\operatorname{Id}_3+h^\alpha G_h(z+t\mathbf{n}(z))).
		\end{equation} 
		For the sake of clarity, we now introduce the following notation. Given a measurable set $A\subseteq S^{h_0}$ and a point $z\in \varrho^S(A)$, where we recall that $\varrho^S$ is the normal projection onto $S$, see \eqref{eq:projection}, we denote by $I_z(A)$ the normal fiber of $A$ at $z$, namely
		\begin{equation*}
			I_z(A)=\left\{t\in\left(-\frac{h_0}{2},\frac{h_0}{2}\right):\,z+t\mathbf n(z)\in A\right\}.
		\end{equation*}    
		By the very definition \eqref{eq:setdet} of $\Omega^-_h$, employing \eqref{eq:yG} and \eqref{frameindif} we infer
		\begin{align}
			\nonumber\frac{1}{h^{2\alpha}}\overline{\mc E}^{\alpha,\pi}_h(y^h)&= \int_{S}\dashint_{-h_0/2}^{h_0/2} \!\!\left(\! \frac{1}{h^{2\alpha}}W(\nabla_h y^h(z{+}t\mathbf{n}(z))) {+} \frac{\pi}{h^\alpha}(\det\nabla_h y^h(z{+}t\mathbf{n}(z)){-}1)\! \right)\mathcal{D}_h(z,t)\d t \d \sigma(z)\\
			&\ge \frac{1}{h^{2\alpha}}\int_{S}\dashint_{-h_0/2}^{h_0/2} W(\operatorname{Id}_3+h^\alpha G_h(z+t\mathbf{n}(z)))\mathcal{D}_h(z,t)\d t \d\sigma(z) \label{eq:comp1} \\
			& \nonumber\quad+\frac{\pi}{h^\alpha}\int_{\varrho^S(\Omega^-_h)}\frac{1}{h_0}\int_{I_z(\Omega_h^-)}(\det\nabla_h y^h(z+t\mathbf{n}(z))-1)\mathcal{D}_h(z,t)\d t \d \sigma(z).
		\end{align}
		Arguing as in \cite[Proof of Theorem 5.25(ii)]{Lew13} and using \eqref{eq:Ghconv2}, we obtain
		\begin{equation}\label{eq:comp2}
			\begin{aligned}
				&\liminf\limits_{h\to 0}\frac{1}{h^{2\alpha}}\int_{S}\dashint_{-h_0/2}^{h_0/2} W(\operatorname{Id}_3+h^\alpha G_h(z+t\mathbf{n}(z))) \mathcal{D}_h(z,t)\d t \d \sigma(z)\\
				&\ge \frac 12 \int_{S}\dashint_{-h_0/2}^{h_0/2} Q_3(G(z+t\mathbf{n}(z))) \d t \d\sigma(z).
			\end{aligned}
		\end{equation}
		
		To treat the second term in the last line of \eqref{eq:comp1}, we introduce the set
		\begin{equation}\label{eq:Ba}
			B_h^\alpha:=\{x\in S^{h_0}: h^{\alpha/2}|G_h(x)|\le 1\},
		\end{equation}
		and observe that $\lim\limits_{h\to 0}|S^{h_0}\setminus B^\alpha_h|=0$ by Chebyshev's inequality. Using again \eqref{eq:yG}, we split the integral under consideration as follows:
		\begin{align*}
			&\frac{1}{h^\alpha}\int_{\varrho^S(\Omega^-_h)}\int_{I_z(\Omega_h^-)}(\det\nabla_h y^h(z+t\mathbf{n}(z))-1)\mathcal{D}_h (z,t)\d t \d \sigma(z) \\
			&= \frac{1}{h^\alpha}\int_{\varrho^S(\Omega_h^- \cap B^\alpha_h)}\int_{I_z(\Omega_h^- \cap B^\alpha_h)}(\det(\operatorname{Id}_3+h^\alpha G_h(z+t\mathbf{n}(z))-1)\mathcal{D}_h(z,t)\d t \d \sigma(z) \\
			& \quad+\frac{1}{h^\alpha}\int_{\varrho^S(\Omega_h^- \setminus B^\alpha_h)}\int_{I_z(\Omega_h^- \setminus B^\alpha_h)}(\det\nabla_hy^h(z+t\mathbf{n}(z))-1)\mathcal{D}_h(z,t)\d t \d \sigma(z).
		\end{align*}
		By \eqref{eq:boundsdet}, \eqref{eq:bdh0}, and \eqref{fixed-h0}, we deal with the second integral above using the estimate:
		\begin{align*}
			&\frac{1}{h^\alpha}\int_{\varrho^S(\Omega_h^- \setminus B^\alpha_h)}\int_{I_z(\Omega_h^- \setminus B^\alpha_h)}|\det\nabla_h y^h(z+t\mathbf{n}(z))-1|\mathcal{D}_h(z,t) \d t \d\sigma(z) \\
			&\le \frac{C}{h^\alpha} \|\det\nabla_h y^h{-}1\|_{L^2(\Omega_h^-)}|S^{h_0}\setminus B_h^\alpha|^{1/2}\le C|S^{h_0}\setminus B_h^\alpha|^{1/2}\xrightarrow[h\to 0]{}0.
		\end{align*}
		For the first integral instead, we exploit the well-known determinant expansion
		\begin{equation}\label{eq:sviluppodet}
			\det(\operatorname{Id}_3+t F)=1+t\tr F+t^2\iota_2(F)+t^3\det F,\quad\text{for }F\in \R^{3\times 3},\,t\in\R,
		\end{equation}
		where we recall that $\iota_2$ satisfies $|\iota_2(F)|\le C|F|^2$. We can thus write
		\begin{align*}
			& \frac{1}{h^\alpha}\int_{\varrho^S(\Omega_h^- \cap B^\alpha_h)}\int_{I_z(\Omega_h^- \cap B^\alpha_h)}(\det(\operatorname{Id}_3+h^\alpha G_h(z+t\mathbf{n}(z))-1)\mathcal{D}_h(z,t)\d t \d \sigma(z)\\
			=& \int_{\varrho^S(\Omega_h^- \cap B^\alpha_h)}\int_{I_z(\Omega_h^- \cap B^\alpha_h)} \left[\tr G_h+ h^\alpha\iota_2(G_h)+ h^{2\alpha} \det G_h \right]\mathcal{D}_h(z,t)\d t \d \sigma(z).
		\end{align*}
		Using \eqref{eq:Ghconv2}, \eqref{eq:bdh0}, \eqref{fixed-h0} and the definition \eqref{eq:Ba} of $B_h^\alpha$, we now obtain
		\begin{align*}
			\bullet\, &  \int_{\varrho^S(\Omega_h^- \cap B^\alpha_h)}\int_{I_z(\Omega_h^- \cap B^\alpha_h)}h^\alpha|\iota_2(G_h)| \mathcal{D}_h(z,t)\d t \d \sigma(z)\le C h^\alpha\|G_h\|_2^2\le C h^\alpha \xrightarrow[h\to 0]{}0;\\
			\bullet\, &  \int_{\varrho^S(\Omega_h^- \cap B^\alpha_h)}\int_{I_z(\Omega_h^- \cap B^\alpha_h)}h^{2\alpha}|\det G_h| \mathcal{D}_h(z,t)\d t \d\sigma(z)\le Ch^{2\alpha}\int_{\Omega_h^- \cap B^\alpha_h}|G_h|^3\d x \\
			&  \quad\le C h^{3\alpha/2} \|G_h\|_2^2\le C h^{3\alpha/2} \xrightarrow[h\to 0]{}0;\\
			\bullet\, &\lim\limits_{h\to 0} \int_{\varrho^S(\Omega_h^- \cap B^\alpha_h)}\int_{I_z(\Omega_h^- \cap B^\alpha_h)}\tr G_h \mathcal{D}_h(z,t)\d t \d \sigma(z)=\int_{S}\int_{-h_0/2}^{h_0/2} \tr G\d t \d\sigma(z).
		\end{align*}
		For the last limit, besides \eqref{eq:Ghconv2}, we exploited the fact that $\mathcal{D}_h \to 1$ pointwise and that $|S^{h_0} \setminus (\Omega_h^- \cap B_h^\alpha)|$ vanishes as $h\to 0$. Indeed, we already know that $\lim\limits_{h\to 0}|S^{h_0}\setminus B_h^\alpha|=0$, and using \eqref{eq:sviluppodet} it is immediate to check that $B_h^\alpha \subseteq \Omega_h^-$ for $h$ sufficiently small. Therefore, we infer
		\begin{equation}\label{eq:comp3}
			\lim\limits_{h\to 0} \frac{1}{h^\alpha}\int_{\varrho^S(\Omega^-_h)}\int_{I_z(\Omega_h^-)}(\det\nabla_h y^h(z+t\mathbf{n}(z))-1)\mathcal{D}_h (z,t)\d t \d \sigma(z)= \int_{S}\int_{-h_0/2}^{h_0/2}  \tr G\d t\d\sigma(z).
		\end{equation}		
		Combining \eqref{eq:comp1}, \eqref{eq:comp2}, and \eqref{eq:comp3}, we thus obtain
		\begin{equation}\label{eq:a}
			\liminf\limits_{h\to 0}\frac{1}{h^{2\alpha}}\overline{\mc E}^{\alpha,\pi}_h(y_h)\ge \frac 12 \int_{S}\dashint_{-h_0/2}^{h_0/2} \Big(Q_3(G(z+t\mathbf{n}(z)))+2\pi\tr G(z+t\mathbf{n}(z))\Big)\d t \d \sigma(z).
		\end{equation}
		Since $G$ and $\overline{G_{\rm tan}}$ clearly induce the same bilinear form on the tangent space (see \eqref{eq:tan} and \eqref{eq:bar}), there exists a vector field $a\in L^2(S^{h_0};\R^3)$ such that
		\begin{equation*}
			\sym G(x)=\sym (\overline{G(x)_{\rm tan}}+a(x)\otimes \mathbf{n}(z)).
		\end{equation*}
		Recalling that $Q_3$ just depends on the symmetric part of its argument, and using the definition \eqref{Q2pi} of $Q_2^\pi$, we deduce
		\begin{equation}\label{eq:b}
			\begin{aligned}
				&\frac 12 \int_{S}\dashint_{-h_0/2}^{h_0/2} \Big(Q_3(G)+2\pi\tr G\Big)\d t \d \sigma(z)\\
				=&\frac 12 \int_{S}\dashint_{-h_0/2}^{h_0/2} \Big(Q_3(\overline{G_{\rm tan}}+a\otimes \mathbf{n})+2\pi\langle a,\mathbf{n}\rangle+2\pi\tr\overline{G_{\rm tan}} \Big)\d t \d \sigma(z)\\
				\ge & \frac 12 \int_{S}\dashint_{-h_0/2}^{h_0/2} \Big(Q_2^\pi(z,{G_{\rm tan}})+2\pi\tr_z{G_{\rm tan}} \Big)\d t \d \sigma(z).
			\end{aligned}      
		\end{equation}
		Since by \eqref{eq:Gv} we can characterize $G_{\rm tan}$ via the identity
		\begin{equation*}
			G_{\rm tan}(z+t\mathbf n(z))=(G_0)_{\rm tan}(z)+\frac{t}{h_0}(\nabla^S(A\mathbf{n})(z)-A(z)\Pi(z))_{\rm tan},
		\end{equation*}
		recalling \eqref{Q2pivalue} we can write
		\begin{equation}\label{eq:c}
			\begin{aligned}
				& \frac 12 \int_{S}\dashint_{-h_0/2}^{h_0/2} \Big(Q_2^\pi(z,{G_{\rm tan}})+2\pi\tr_z{G_{\rm tan}} \Big)\d t \d \sigma(z)\\
				=&\frac 12 \int_{S}\dashint_{-h_0/2}^{h_0/2} \Big(Q_2(z,(G_0)_{\rm tan})+\frac{t^2}{h_0^2}Q_2(z,(\nabla^S(A\mathbf{n})-A\Pi)_{\rm tan})\\
				&\qquad\qquad\qquad\qquad+\pi\mathfrak L(z, (G_0)_{\rm tan})+2\pi\tr_z{(G_0)_{\rm tan}}+\pi^2 \kappa  \Big)\d t \d \sigma(z)\\
				=&\frac 12 \int_{S}\Big(Q_2(z,(G_0)_{\rm tan})+\frac{1}{12}Q_2(z,(\nabla^S(A\mathbf{n})-A\Pi)_{\rm tan})\\
				&\qquad\qquad\qquad\qquad+\pi\mathfrak L(z, (G_0)_{\rm tan})+2\pi\tr_z{(G_0)_{\rm tan}}+\pi^2 \kappa  \Big) \d \sigma(z).
			\end{aligned}            
		\end{equation}
		Observe that, in the first equality above, the linear terms in $t$ vanish after integration on the symmetric interval $(-h_0/2,h_0/2)$.
		
		By collecting \eqref{eq:a}, \eqref{eq:b} and \eqref{eq:c}, we now conclude the proof of \eqref{eq:Gliminf} by means of \eqref{eq:B} and recalling the expressions \eqref{eq:envK}, \eqref{eq:envKlin} and \eqref{eq:enapi}.

		\medskip
		
		\noindent\underline{\textbf{Limsup inequality.}} We now move to the proof of the limsup inequality \eqref{eq:Glimsup}. We employ the same construction for a recovery sequence presented in \cite{LMP10}, see also \cite[Theorem 6.3]{Lew13}, highlighting how to deal with the pressure term. We thus refer to them for details.
		
		Fix a pair $(V,B)\in  \mathcal{V}(S)\times \mathcal{B}(S)$. By Definition \ref{def2}, $ B $ can be represented as the $ L^2$-limit of symmetric surface gradients $ \operatorname{sym} \nabla^S w^h $, for a certain family of vector fields $ w^h \in W^{1,2}(S;\mathbb{R}^3)$. Without loss of generality, we may assume that the fields $w^h$ are smooth and satisfy
		\begin{equation}
			\label{eq:approB}
			\lim\limits_{h\to 0}h^{1/2} \| w^h \|_{W^{2, \infty}(S)}= 0. 
		\end{equation}
		
		Moreover, $ V \in \mathcal{V}(S) $ can be approximated by a sequence of functions $ v^h \in W^{2,\infty}(S;\mathbb{R}^3)$ such that
		\begin{equation}
			\label{eq:approV}
			\begin{aligned}
				&h^{\alpha-1} \|v^h\|_{W^{2,\infty}(S)} \leq \varepsilon,\\
				&\lim\limits_{h\to 0}\|v^h - V\|_{W^{2,2}(S)}= 0,  \\
				&\lim\limits_{h\to 0}\frac{1}{h^{2\alpha-2}} \left| \{z \in S; \ v^h(z) \neq V(z)\} \right| = 0,
			\end{aligned}
		\end{equation}
		where $ \varepsilon >0 $ is a fixed parameter which will be sent to $0$ afterwards. We also recall, see (6.8) in \cite{Lew13}, that there holds
		\begin{equation}\label{eq:symvanish}
			\lim\limits_{h\to 0}\frac{1}{h}\| \operatorname{sym} \nabla^S v^h \|_{L^2(S)} = 0.
		\end{equation}
		
		The recovery sequence of rescaled deformations is then defined by
		\begin{equation} \label{eq:recover}
			\begin{aligned}
				y^h(z + t\mathbf{n}(z)) &:= z + h^{\alpha-1} v^h(z) + h^{\alpha} w^h(z) \\
				&\quad + t \frac{h}{h_0} \mathbf{n}(z) + t \frac{h^{\alpha}}{h_0} \left( \Pi v_{{\rm tan}}^h - \nabla^S \langle v^h, \mathbf{n} \rangle \right)(z)  \\
				&\quad - t \frac{h^{\alpha+1}}{h_0} \left( \mathbf{n}^T \nabla^S w^h \right)(z) + t \frac{h^{\alpha+1}}{h_0} d^{0,h}(z) + t^2 \frac{h^{\alpha+1}}{2h_0^2} d^{1,h}(z). 
			\end{aligned}
		\end{equation}
		Above, we used the notation
		\[
		v_{{\rm tan}}^h(z) = v^h(z)  - \langle v^h(z), \mathbf{n}(z) \rangle \mathbf{n}(z),
		\]
		while the sequences of vector fields $ d^{0,h}, d^{1,h} \in W^{1,\infty}(S; \mathbb{R}^3)$ satisfy
		\begin{align*}
			&d^{0,h} \xrightarrow[h\to 0]{} d^0  + \frac{\delta}{2} A^2 \mathbf{n} + \frac{\delta}{2} |{A} \mathbf{n}|^2 \mathbf{n},&& \text{strongly in $L^2(S;\mathbb{R}^3),$}\\
			&
			d^{1,h} \xrightarrow[h\to 0]{}  d^1 + \left( \mathbf{n}^T A \Pi - \mathbf{n}^T \nabla^S ({A} \mathbf{n}) \right),&&\text{strongly in $L^2(S;\mathbb{R}^3),$}
		\end{align*}
		where $d^0$ and $d^1$ are specified below in \eqref{eq:d0d1}, while $\delta = 1$ if $\alpha =2$ and $\delta =0$ if $\alpha > 2$. We may additionally require that
		\begin{equation}
			\label{eq:add}
			\lim\limits_{h\to 0}h^{1/2} \left( \|d^{0,h}\|_{W^{1,\infty}(S)} + \|d^{1,h}\|_{W^{1,\infty}(S)} \right) = 0.
		\end{equation}          
		We also choose
		$$
		R_h = \bar{R}_h := \operatorname{Id}_3, \qquad c_h := 0,
		$$
		so that $\widetilde{y}^h = y^h$. 
		
		As in Step 2 of \cite[Proof of Theorem 6.3]{Lew13}, \eqref{eq:convuva} is now a consequence of \eqref{eq:recover}, \eqref{eq:approB}, \eqref{eq:approV} and \eqref{eq:add}. Furthermore, a direct computation yields
		\begin{align*}
			&V^h_\alpha[y^h](z)  = v^h(z) + hw^h(z) + \frac{1}{24} h^2 d^{1,h}(z), \\
			&\frac{1}{h} \operatorname{sym} \nabla^S V^h_\alpha[y^h](z) = \frac{1}{h} \operatorname{sym} \nabla^S v^h(z) + \operatorname{sym} \nabla^S w^h + \frac{h}{24} \operatorname{sym} \nabla^S d^{1,h}(z),
		\end{align*}
		whence \eqref{eq:convuvb} and \eqref{eq:convuvc} follow by \eqref{eq:approB}, \eqref{eq:approV}, \eqref{eq:symvanish} and \eqref{eq:add}. 
		
		Next, we set
		\begin{equation}
			\label{eq:Kh}
			K^h(x) := \frac{(\nabla_h y^h(x))^T \nabla_h y^h(x) - \operatorname{Id}_3}{h^\alpha},
		\end{equation}
		which, by (6.13) and (6.17) in \cite{Lew13}, satisfies
		\begin{equation}
			\label{eq:boundedKh}
			h^\alpha \|  K^h \|_{L^\infty(S^{h_0})} \leq C\varepsilon,
		\end{equation}
		and
		\begin{equation}
			\label{eq:kh4}
			\lim\limits_{h\to 0}h^{2\alpha} \int_{S^{h_0}} |K^h|^4 \d x = 0.
		\end{equation}
		Moreover, by arguing as in Steps 4 and 5 of \cite[Proof of Theorem 6.3]{Lew13} (see also (6.15) therein), one obtains
		\begin{equation}
			\label{eq:convergeKh}
			\frac{1}{2}K^h \xrightarrow[h\to 0]{} \overline{K_1} \circ \varrho^S + \frac{t}{h_0}\overline{K_2} \circ \varrho^S+\sym\zeta \otimes \mathbf{n},\qquad\text{strongly in } L^2(S^{h_0};\mathbb{R}^{3\times3}),
		\end{equation}
		where
		\begin{equation}
			\label{limits-identify}
			\begin{aligned}
				&K_1(z) = {B(z)-\frac{\delta}{2}(A^2)_{{\rm tan}}(z)},\\
				&K_2(z) = \operatorname{sym}{\left(\nabla^S(A\mathbf{n})-A\Pi\right)_{{\rm tan}}(z)},\\
				&\zeta(z+t\mathbf{n}(z)) = d^0(z) +\frac{t}{h_0}d^1(z).
			\end{aligned}
		\end{equation}        
		
		Now, as shown in Step 6 of  \cite[Proof of Theorem 6.3]{Lew13}, by using \eqref{eq:Kh}, \eqref{eq:boundedKh}, \eqref{eq:kh4}, and \eqref{eq:convergeKh} one deduces
		\begin{equation}\label{eq:lim1}
			\begin{aligned}
				&\limsup\limits_{h\to 0}\frac{1}{h^{2\alpha}}\int_{S}\dashint_{-h_0/2}^{h_0/2} W(\nabla_h y^h(z+t\mathbf{n}(z))) \mathcal{D}_h(z,t)\d t \d \sigma(z)\\
				\le&\limsup\limits_{h\to 0}\frac 12\int_{S}\dashint_{-h_0/2}^{h_0/2} Q_3\left(\frac 12 K^h(z+t\mathbf n(z))\right)\mathcal D_h(z,t)\d t \d \sigma(z)+o_\varepsilon(1)\\
				=& \frac 12 \int_{S}\dashint_{-h_0/2}^{h_0/2} Q_3\Big(\overline{K_1}(z)+d^0(z)\otimes\mathbf n(z)+\frac{t}{h_0}\Big( \overline{K_2}(z)+{d^1(z)}\otimes\mathbf n(z)\Big)\Big) \d t \d \sigma(z) + o_\varepsilon(1) \\
				=&\frac{1}{2} \int_S Q_3(\overline{K_1}(z)+d^0(z)\otimes\mathbf n(z))\d \sigma(z)+\frac{1}{24} \int_S Q_3(\overline{K_2}(z)+{d^1(z)}\otimes\mathbf n(z)) \d \sigma(z) + o_\varepsilon(1) ,
			\end{aligned}
		\end{equation}
		where $o_\eps(1)$ is a quantity which vanishes as $\eps\to 0$.
		
		We now focus on the pressure term. We first observe that, by polar decomposition, we can write
		\begin{equation*}
			\nabla_h y^h(x)=U^h(x)\Big((\nabla_h y^h(x))^T\nabla_h y^h(x)\Big)^\frac 12=U^h(x)(\operatorname{Id}_3+h^\alpha K^h(x))^\frac 12,
		\end{equation*}
		where $U^h(x)$ is an orthogonal matrix. In particular, recalling \eqref{eq:sviluppodet}, there holds
		\begin{align*}
			\det \nabla_h y^h(x)&=\det(\operatorname{Id}_3+h^\alpha K^h(x))^\frac 12=\sqrt{\det(\operatorname{Id}_3+h^\alpha K^h(x))}\\
			&=\sqrt{1+h^\alpha\tr K^h(x)+\iota_2(h^\alpha K^h(x))+h^{3\alpha}\det K^h(x)}\\
			&\le1+h^\alpha\tr \frac 12 K^h(x)+Ch^{2\alpha}|K^h(x)|^2 +Ch^{3\alpha}|K^h(x)|^3 , 
		\end{align*}
		where we exploited the elementary inequality $\sqrt{1+\lambda}\le 1+\frac 12\lambda$, for $\lambda\ge -1$. By means of the above estimate, together with \eqref{eq:boundedKh} and \eqref{eq:convergeKh}, we then infer
		\begin{equation}\label{eq:equal1}
			\begin{aligned}
				&\limsup\limits_{h\to 0}\frac{1}{h^\alpha}\int_{S}\dashint_{-h_0/2}^{h_0/2} (\det\nabla_h y^h(z+t\mathbf{n}(z))-1) \mathcal{D}_h (z,t)\d t \d \sigma(z) \\
				\leq& \lim\limits_{h\to0}\left(\int_{S}\dashint_{-h_0/2}^{h_0/2} \tr \left(\frac{1}{2}K^h(z+t\mathbf{n}(z))\right) \mathcal{D}_h (z,t)\d t \d \sigma(z) +Ch^\alpha\|K^h\|_2^2+C\varepsilon h^\alpha\|K^h\|_2^2\right)\\
				=& \int_{S}\dashint_{-h_0/2}^{h_0/2} \tr \Big(\overline{K_1}(z)+d^0(z)\otimes\mathbf n(z)+\frac{t}{h_0}\Big( \overline{K_2}(z)+{d^1(z)}\otimes\mathbf n(z)\Big)\Big)  \d t \d \sigma(z)\\
				=& \int_{S} \tr \overline{K_1}(z)+\langle d^0(z),\mathbf n(z)  \rangle \d \sigma(z).
			\end{aligned}
		\end{equation}
		By combining \eqref{eq:lim1} and \eqref{eq:equal1}, we now obtain
		\begin{align*}
			\limsup\limits_{h\to 0}\frac{1}{h^{2\alpha}}\overline{\mc E}^{\alpha,\pi}_h(y_h)\le& \frac 12 \int_S\Big( Q_3(\overline{K_1}+d^0\otimes \mathbf n)+2\pi\langle d^0,\mathbf n\rangle\Big) \d\sigma(z)+\pi\int_S \tr \overline{K_1} \d\sigma(z)\\
			&+\frac{1}{24}\int_S Q_3(\overline{K_2}+{d^1}\otimes\mathbf n) \d \sigma(z)+o_\varepsilon(1).
		\end{align*}
		We finally choose $d^0$ and $d^1$ so that (recall \eqref{eq:quad2'} and \eqref{Q2pi})
		\begin{subequations}\label{eq:d0d1}
			\begin{align}
				Q_2^\pi \left(z,K_1(z)\right)&= Q_3\left(\overline{K_1}(z)+d^0 (z) \otimes \mathbf{n}(z)\right){+}2\pi  \langle d^0 (z), \mathbf{n}(z)\rangle,\\		Q_2(z,K_2(z))&=Q_3(\overline{K_2}(z)+ d^1(z)\otimes \mathbf{n}(z)),
			\end{align}          
		\end{subequations}
		whence by \eqref{Q2pivalue} we deduce
		\begin{align*}
			\limsup\limits_{h\to 0}\frac{1}{h^{2\alpha}}\overline{\mc E}^{\alpha,\pi}_h(y_h)\le& \frac 12 \int_SQ_2(z,K_1(z)) \d\sigma(z)+\frac{1}{24}\int_S Q_2(z,K_2(z)) \d \sigma(z)\\
			&+\pi\int_S\left(\frac 12 \mathfrak L(z,K_1(z))+ \tr_z {K_1}(z)\right) \d\sigma(z)+\frac{\pi^2}{2}\int_S\kappa(z) \d\sigma(z)+o_\varepsilon(1).
		\end{align*}
		We thus conclude by means of \eqref{limits-identify}, recalling expressions \eqref{eq:envK}, \eqref{eq:envKlin}, and \eqref{eq:enapi}, since one can deal with the term $o_\varepsilon(1)$ by considering a recovery sequence of the form \eqref{eq:recover} obtained via a diagonal argument and by sending $\eps\to 0$.
	\end{proof}

	\bigskip
	
	\noindent\textbf{Data availability statement.} No data were generated or analysed in this study.
	
	\bigskip
	
	\noindent\textbf{Conflict of interest statement.} The authors declare that they have no competing interests relevant to the content of this article.
	\bigskip
	
	\noindent\textbf{Acknowledgements and funding.}
	The authors have been partially supported by the Ministry of Education, Youth, and Sports of the Czech Republic via the project Ferroic Multifunctionalities (FerrMion), no. CZ.02.01.01/00/22\_008/0004591, co-funded by the European Union, and by the grant PRIMUS/24/SCI/020. F. R. also acknowledges the support of the Czech Science Foundation (GA\v CR) via the project 26-21585K. F. R. is member of GNAMPA (INdAM).

	\newcommand \auth{} 
	\newcommand \jour {} 
	\newcommand \book {}

\end{document}